\documentclass[12pt,a4paper]{amsart}
\usepackage{amsmath,amssymb,amsthm} 
\usepackage{enumerate}
\usepackage{fullpage}

\newtheorem{theorem}{Theorem}
\newtheorem{lemma}[theorem] {Lemma}    
\newtheorem{corollary}[theorem] {Corollary}  
\newtheorem{proposition}[theorem] {Proposition}    
\newtheorem{question}[theorem] {Question}

\DeclareMathOperator{\hit}{hit}
\newcommand{\AHM}{\mathcal{A}\mathcal{H}\mathcal{M}}

\begin{document}
\title{Classification of maximum hittings by large families}
\author{Candida Bowtell}
\address{Mathematical Institute, University of Oxford, Radcliffe Observatory Quarter, Woodstock Rd, Oxford OX2 6GG, United Kingdom}
\email{bowtell@maths.ox.ac.uk}
\author{Richard Mycroft}
\address{School of Mathematics, University of Birmingham, Birmingham B15 2TT, United Kingdom}
\email{r.mycroft@bham.ac.uk}
\thanks{This research originated in a summer research placement funded by an award from the LMS Undergraduate Bursary Scheme.}

\date{}

\begin{abstract}
For integers $r$ and $n$, where $n$ is sufficiently large, and for every set $X \subseteq [n]$ we determine the maximal left-compressed intersecting families $\mathcal{A}\subseteq \binom{[n]}{r}$ which achieve maximum hitting with $X$ (i.e. have the most members which intersect $X$). This answers a question of Barber, who extended previous results by Borg to characterise those sets~$X$ for which maximum hitting is achieved by the star.
\end{abstract}

\maketitle

\section {Introduction}

The celebrated Erd\H{o}s-Ko-Rado Theorem~\cite{EKR} states that for all integers $r \leq n/2$ and every family $\mathcal{A} \subseteq \binom{[n]}{r}$, if $\mathcal{A}$ is intersecting (meaning that no two members of $\mathcal{A}$ are disjoint), then $|\mathcal{A}| \leq \binom{n-1}{r-1}$. To see that this bound is tight, fix any $a \in [n]$ and consider the family $\mathcal{S}_a := \{A \in \binom{[n]}{r} : a \in A\}$. We refer to $\mathcal{S}_a$ as the \emph{star at $a$}, and we denote the star at 1 simply by $\mathcal{S}$ (note that $\mathcal{S}_a$ and $\mathcal{S}$ both depend on the values of $n$ and $r$, but this will always be clear from the context). For $r > n/2$ the family $\binom{[n]}{r}$ itself is intersecting, so  the Erd\H{o}s-Ko-Rado Theorem determines the maximum size of an intersecting family on $\binom{[n]}{r}$ for all integers $r$ and $n$.

One natural extension of this result is to find the maximum size of an intersecting family $\mathcal{A} \subseteq \binom{[n]}{r}$ which is \emph{non-trivial}, that is, which is not a subfamily of a star. Hilton and Milner~\cite{HM} demonstrated that in fact such families must be significantly smaller than stars. More precisely, they proved that for all $1<r<n/2$, every non-trivial intersecting family $\mathcal{A} \subseteq \binom{[n]}{r}$ has $|\mathcal{A}| \leq \binom{n-1}{r-1}-\binom{n-r-1}{r-1}+1$. This bound is also tight, as demonstrated by the \emph{Hilton-Milner family} $\mathcal{HM} :=\{A \in \mathcal{S}: A \cap [2, r+1] \neq \emptyset\} \cup [2, r+1]$, and Hilton and Milner additionally proved that (up to isomorphism) $\mathcal{HM}$ is the unique non-trivial intersecting family of this size for $r=2$ and $r \geq 4$, and the families $\mathcal{HM}$ and $\mathcal{A}_{2,3}:=\{A \in \binom{[n]}{r} : |A \cap \{1, 2, 3\}| \geq 2\}$ are the only two non-trivial intersecting families of this size for $r =3$. The logical next step is to ask for the maximum size of an intersecting family $\mathcal{A} \subseteq \binom{[n]}{r}$ which is neither a subfamily of the star nor of the Hilton-Milner family. For $r \geq 4$ this was solved implicitly by Hilton and Milner~\cite{HM}, and very recently Han and Kohayakawa~\cite{HK} gave a simpler proof which also includes the case $r=3$.

The method of \emph{compression} (also known as \emph{shifting}), is a key technique in proving each of the results stated above. Given $i, j \in [n]$ and a family $\mathcal{A} \subseteq \binom{[n]}{r}$, the \emph{$ij$-shift} $S_{ij}(\mathcal{A})$ of $\mathcal{A}$ is the family obtained by the following change: for each set $A \in \mathcal{A}$ for which $i \in A$ and $j \notin A$, replace $A$ by $B := (A \setminus \{i\}) \cup \{j\}$ in $\mathcal{A}$ if $B \notin \mathcal{A}$. We say that a family is \emph{left-compressed} if $S_{ij}(\mathcal{A}) = \mathcal{A}$ for every $i > j$. The following equivalent form of this definition is convenient.
For sets $A, B \in \binom{[n]}{r}$, write $A = \{a_1, \dots, a_r\}$ and $B = \{b_1, \dots, b_r\}$ with $a_1 \leq \dots \leq a_r$ and $b_1 \leq \dots \leq b_r$. We say that $A \leq B$ if $a_i \leq b_i$ for every $i \in [r]$. A family $\mathcal{A} \subseteq \binom{[n]}{r}$ is then left-compressed if for every $A, B \in \binom{[n]}{r}$ with $A \in \mathcal{A}$ and $B \leq A$ we have $B \in \mathcal{A}$. For a wide-ranging overview of compressions of set systems, see the survey by Frankl~\cite{Frankl}.
The relevance to intersecting families arises through the well-known fact that if $\mathcal{A} \subseteq \binom{[n]}{r}$ is intersecting then for every $i, j \in [n]$ the family $S_{ij}(\mathcal{A})$ is also intersecting, so when seeking the maximum size of an intersecting family we can restrict our attention solely to left-compressed families. In particular, it is easily observed that the families $\mathcal{S}$, $\mathcal{HM}$ and $\mathcal{A}_{2, 3}$ are each left-compressed.

Another natural extension of the Erd\H{o}s-Ko-Rado Theorem is to ask for the maximum size of an intersecting family $\mathcal{A} \subseteq \binom{[n]}{r}$ if we only count those sets $A \in \mathcal{A}$ which intersect a fixed subset $X \subseteq[n]$. Without further restriction this problem is a trivial consequence of the Erd\H{o}s-Ko-Rado Theorem (we can fix any $a \in X$ and take $\mathcal{A} = \mathcal{S}_a$), but Borg~\cite{Borg} observed that the `correct' interpretation of the problem is to consider only left-compressed families~$\mathcal{A}$. Using his terminology, we say that a set $A$ \emph{hits} a set $X$ if $A \cap X \neq \emptyset$, and the \emph{hitting} of a family $\mathcal{A}$ with a set $X$ is 
$$\hit_X(\mathcal{A}) := |\{A \in \mathcal{A} : A \cap X \neq \emptyset\}|,$$ that is, the number of members of~$\mathcal{A}$ which hit $X$. So we seek to identify, for each $n, r$ and $X \subseteq [n]$, the left-compressed intersecting families~$\mathcal{A} \in \binom{[n]}{r}$ which maximise $\hit_X(\mathcal{A})$. Clearly we need only consider maximal left-compressed intersecting families (MLCIFs)\footnote{It is important to note that the order of conditions here is irrelevant, in that a maximal left-compressed intersecting family is precisely a maximal intersecting family which is left-compressed. Indeed, if an MLCIF $\mathcal{A}$ is not maximal with respect to the intersecting property, then there is a larger intersecting family $\mathcal{A'}$ with $\mathcal{A} \subseteq \mathcal{A'}$, and by repeated shifts of $\mathcal{A'}$ we obtain a left-compressed intersecting family $\mathcal{A''}$ with $|\mathcal{A''}| = |\mathcal{A'}|$ and $\mathcal{A} \subseteq \mathcal{A''}$, contradicting the maximality of $\mathcal{A}$.}, and we say that an MLCIF is \emph{optimal} for $X$ if it achieves this maximum.

Fix any $1 \leq r \leq n$. If $r > n/2$ then the family $\binom{[n]}{r}$ is the only MLCIF, so vacuously is the unique optimal MLCIF for every $X \subseteq [n]$. We therefore assume henceforth that $n \geq 2r$. Likewise, in the case $r=1$ the family $\{\{1\}\}$ is vacuously the unique optimal MLCIF for every $X \subseteq [n]$. For $r=2$ there exist two MLCIFs, namely $\mathcal{S} = \{\{1,x\} : x \in [2, n]\}$ and $\mathcal{A}_{2, 3} = \{\{1,2\},\{2,3\},\{1,3\}\}$, and a straightforward case analysis shows that $\mathcal{A}_{2, 3}$ is the unique optimal MLCIF for $X \in \{\{2\}$, $\{3\}$, $\{2, 3\}\}$, that both $\mathcal{A}_{2, 3}$ and $\mathcal{S}$ are optimal MLCIFs for $X = \{2, 3, x\}$ with $x \in [4,n]$ and $X = \{y, z\}$ with $y \in \{2, 3\}$ and $z \in [4, n]$, and that for every other non-empty $X$ the family $\mathcal{S}$ is the unique optimal MLCIF. (Actually, if $n=4$ there are a few more cases in which both families are optimal MLCIFs, but we omit the details since our primary interest is the case where $n$ is large.) Unfortunately, for $r \geq 3$ the number of MLCIFs grows rapidly, so case analyses quickly prove intractable. Observe, however, that if $1 \in X$ then the Erd\H{o}s-Ko-Rado Theorem implies that $\mathcal{S}$ is the unique optimal MLCIF for $X$, and if $X$ is empty then trivially every MLCIF is optimal for $X$. We therefore restrict our attention henceforth to non-empty sets $X \subseteq [2, n]$.
Borg considered for which such sets the star is optimal, and gave both general sufficient conditions under which this occurs (Theorem~\ref{borg}), as well as a precise characterisation for the case $|X| = r$ (Theorem~\ref{borg2}).

\begin{theorem}[Borg~\cite{Borg}] \label{borg}
Suppose that $r \geq 2$ and $n \geq 2r$. Then $\mathcal{S}$ is optimal for every $X \subseteq [2,n]$ satisfying at least one of the following:
\begin{enumerate}[(i)]
\item $|X|>r$;
\item $X \geq X'$, where $\mathcal{S}$ is known to be optimal for $X'$;
\item $X=\{2k, 2k+2,\dots, 2r\}$ for any $k \leq r$.
\end{enumerate}
\end{theorem}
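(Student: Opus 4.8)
The plan is to treat the three parts separately, after recording that for the star the hitting depends only on the size of $X$: since $1 \notin X$, a member $A \in \mathcal{S}$ fails to hit $X$ exactly when $A \subseteq \{1\} \cup ([n] \setminus X)$, so
\[
\hit_X(\mathcal{S}) = \binom{n-1}{r-1} - \binom{n-1-|X|}{r-1}.
\]
Throughout I would use the decomposition $\mathcal{S} = P \sqcup Q$ and $\mathcal{A} = P \sqcup R$, where $P = \mathcal{A} \cap \mathcal{S}$, $Q = \mathcal{S} \setminus \mathcal{A}$, and $R = \mathcal{A} \setminus \mathcal{S}$ is precisely the set of members of $\mathcal{A}$ avoiding $1$. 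Then $\hit_X(\mathcal{S}) - \hit_X(\mathcal{A}) = \hit_X(Q) - \hit_X(R)$, so in each case the goal reduces to the single inequality $\hit_X(R) \le \hit_X(Q)$: I must dominate the members of $\mathcal{A}$ that avoid $1$ and hit $X$ by the members of $\mathcal{S} \setminus \mathcal{A}$ that hit $X$.

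For part (ii) this reformulation is not even needed. I would first reduce $X \ge X'$ to a single elementary up-step, writing $X = (X' \setminus \{j\}) \cup \{j+1\}$ with $j \in X'$, $j+1 \notin X'$, and setting $Y = X \cap X' = X' \setminus \{j\}$. Comparing memberships shows that $\hit_X(\mathcal{A}) - \hit_{X'}(\mathcal{A})$ equals the number of $A \in \mathcal{A}$ with $A \cap Y = \emptyset$, $j+1 \in A$, $j \notin A$, minus the number with $A \cap Y = \emptyset$, $j \in A$, $j+1 \notin A$. The map $A \mapsto (A \setminus \{j+1\}) \cup \{j\}$ sends the first type to the second, stays inside $\mathcal{A}$ because $(A \setminus \{j+1\}) \cup \{j\} \le A$ and $\mathcal{A}$ is left-compressed, and is injective; hence $\hit_X(\mathcal{A}) \le \hit_{X'}(\mathcal{A})$ for every left-compressed $\mathcal{A}$. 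Since $\hit_X(\mathcal{S}) = \hit_{X'}(\mathcal{S})$ depends only on $|X| = |X'|$, optimality for $X'$ yields $\hit_X(\mathcal{A}) \le \hit_{X'}(\mathcal{A}) \le \hit_{X'}(\mathcal{S}) = \hit_X(\mathcal{S})$, which is (ii).

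For part (i) the governing observation is that $|X| > r$ forces $X \not\subseteq A$ for every $r$-set $A$. I would build an injection $\phi \colon R \to Q$ whose images all hit $X$, so that $\hit_X(R) \le |R| \le \hit_X(Q)$. The model case is $n = 2r$, where one simply takes $\phi(A) = [n] \setminus A$: this contains $1$ (as $1 \notin A$), lies outside $\mathcal{A}$ since $\mathcal{A}$ is intersecting, and hits $X$ precisely because $X \not\subseteq A$. For $n > 2r$ the complement has the wrong size, and the task is to replace it by an injection $A \mapsto \phi(A)$ with $1 \in \phi(A)$, with $\phi(A) \cap A = \emptyset$ (which certifies $\phi(A) \notin \mathcal{A}$), and with $\phi(A) \cap X \ne \emptyset$ (attainable since $X \setminus A \ne \emptyset$). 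Here it is cleanest to invoke part (ii) to reduce to the case where $X$ is an initial segment $[2, |X|+1]$, and then to induct on $n$, splitting $\mathcal{A}$ according to whether a set contains the top element $n \notin X$; the base case $n = 2r$ is the complement map, and for $n > 2r$ one must check that the intersecting and left-compressed properties descend to $[n-1]$ while the hitting bookkeeping survives. Constructing this injection for $n > 2r$ so that it is simultaneously injective and valued in hitters of $X$ is the main difficulty of this part.

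The genuinely delicate case is (iii), where $|X| = r - k + 1 \le r$ so part (i) does not apply and the sets $\{2k, 2k+2, \dots, 2r\}$ are the minimal star-optimal sets of their size; here $\hit_X(R) \le \hit_X(Q)$ is tight, with other maximal families (generalising $\mathcal{A}_{2,3}$) also optimal, so any injection must be exactly calibrated rather than merely surjective onto the hitters of $Q$. I would exploit the arithmetic of $X$ through the perfect matching $\{1,2\}, \{3,4\}, \dots, \{2r-1, 2r\}$ of $[2r]$, in which the elements of $X$ are exactly the larger partners of the top $r-k+1$ pairs: an intersecting family cannot shift its mass towards $1$ on these pairs without paying on $X$. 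Concretely I would pair each $A \in R$ that hits $X$ with a member of $Q$ obtained by a swap along this matching that pushes $A$ towards $1$ while keeping it on an element of $X$, using the intersecting property to force the image out of $\mathcal{A}$. I expect this calibration --- producing one injection that is well defined into $Q$, injective, and valued in hitters of $X$, with equality exactly for the competitor families --- to be the principal obstacle of the theorem, most likely resolved by induction on $r$: peel off the pair $\{2r-1, 2r\}$ and reduce to the same statement for $\{2k, \dots, 2r-2\}$ on a smaller ground set.
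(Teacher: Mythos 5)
This statement is Borg's theorem, which the paper quotes without proof (it is cited from \cite{Borg}), so there is no in-paper argument to compare against; I can only assess your proposal on its own terms. Your general framework is sensible (the decomposition $\mathcal{S}=P\sqcup Q$, $\mathcal{A}=P\sqcup R$, the observation that $\hit_X(\mathcal{S})$ depends only on $|X|$, and the goal of injecting the hitters of $R$ into the hitters of $Q$), and your proof of part (ii) is complete and correct: the reduction of $X\geq X'$ to a chain of elementary up-steps is legitimate (always raise the largest entry that is still below its target), the identity for $\hit_X(\mathcal{A})-\hit_{X'}(\mathcal{A})$ is right, and the map $A\mapsto (A\setminus\{j+1\})\cup\{j\}$ is injective and lands in $\mathcal{A}$ by left-compression. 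Part (ii) would stand as a proof.

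Parts (i) and (iii), however, contain genuine gaps rather than proofs: in each case the object that carries the entire weight of the argument is announced but never constructed. For (i), the complement map only works at $n=2r$, and for $n>2r$ you defer to an injection you describe as ``the main difficulty of this part''; the fallback induction on $n$ is not carried out, and it has unverified steps --- for instance, the family $\{A\setminus\{n\}: n\in A\in\mathcal{A}\}$ is not obviously intersecting (one needs the separate lemma that two members of a left-compressed intersecting family with $n\geq 2r$ cannot meet only in $n$), and the reduction via (ii) to $X=[2,|X|+1]$ still leaves every size $|X|\in[r+1,n-1]$ to handle, including sizes where $X$ reaches beyond $[2r]$ and may contain $n$. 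For (iii) the situation is worse: the matching $\{1,2\},\{3,4\},\dots,\{2r-1,2r\}$ is a plausible heuristic, but no map is defined, no well-definedness or injectivity is argued, and you yourself flag the calibration as ``the principal obstacle,'' to be ``most likely resolved'' by an induction on $r$ that is not set up (note also that the statement is claimed for all $n\geq 2r$, not just $n=2r$, so the pairs beyond position $2r$ of the ground set must also be accounted for). As it stands the proposal proves one of the three clauses and offers research plans for the other two.
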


\begin{theorem}[Borg~\cite{Borg}] \label{borg2}
Suppose that $r \geq 2$ and $n \geq 2r$, and fix $X \subseteq [2, n]$ with $|X| = r$. 
If $n=2r$, then $\mathcal{S}$ is optimal for $X$ 
if and only if $X \geq \{2,4,\dots,2r\}$, whilst if $n>2r$ then $\mathcal{S}$ is optimal for $X$ 
if and only if one of the following statements holds:
\begin{enumerate}[(i)]
\item $r = 2$ and $X \neq \{2, 3\}$; 
\item $r = 3$ and $|X \cap \{2, 3\}| \leq 1$;
\item $r \geq 4$ and $X \neq [2, r+1]$.
\end{enumerate}
\end{theorem}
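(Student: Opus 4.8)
My plan is to treat the two regimes $n>2r$ and $n=2r$ separately, after recording the one computation that drives everything: for any $X\subseteq[2,n]$ with $|X|=r$ we have $\hit_X(\mathcal{S})=\binom{n-1}{r-1}-\binom{n-r-1}{r-1}$, since a member of $\mathcal{S}$ hits $X$ exactly when it contains $1$ together with at least one of the $r$ elements of $X$. The crucial features are that this value is \emph{independent} of the particular $X$, and that it is exactly one less than the Hilton--Milner bound $\binom{n-1}{r-1}-\binom{n-r-1}{r-1}+1$. The whole classification will turn on this off-by-one.

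For $n>2r$ I would argue as follows. The only left-compressed star is $\mathcal{S}$ itself (for $a\ge 2$ the family $\mathcal{S}_a$ contains a set $A$ with $a\in A$, $1\notin A$, and the dominance-smaller set $(A\setminus\{a\})\cup\{1\}\notin\mathcal{S}_a$, so $\mathcal{S}_a$ is not left-compressed), hence every MLCIF other than $\mathcal{S}$ is non-trivial. For such a family the Hilton--Milner theorem gives $\hit_X(\mathcal{A})\le|\mathcal{A}|\le\binom{n-1}{r-1}-\binom{n-r-1}{r-1}+1=\hit_X(\mathcal{S})+1$, so $\mathcal{S}$ can fail to be optimal only if some MLCIF attains $\hit_X(\mathcal{A})=\hit_X(\mathcal{S})+1$; equality forces $|\mathcal{A}|$ to equal the Hilton--Milner bound \emph{and} every member of $\mathcal{A}$ to meet $X$. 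By the Hilton--Milner uniqueness statement, restricted to left-compressed families, the only candidates are $\mathcal{A}=\mathcal{HM}$ (for all $r$) and, when $r=3$, also $\mathcal{A}=\mathcal{A}_{2,3}$. Thus $\mathcal{S}$ is optimal for $X$ precisely when \emph{neither} of these families has all of its members meeting $X$, and it remains to evaluate this condition. A short argument shows that $\mathcal{HM}$ has a member disjoint from $X$ unless $X=[2,r+1]$ (choose $1$, an element of $[2,r+1]\setminus X$, and enough further elements of $[n]\setminus X$, which exist since $|[n]\setminus X|=n-r>r$), while every member of $\mathcal{A}_{2,3}$ meets $X$ exactly when $\{2,3\}\subseteq X$. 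Assembling these for $r=2$ (where $\mathcal{HM}=\mathcal{A}_{2,3}$ and the condition is $X=\{2,3\}$), for $r=3$, and for $r\ge 4$ yields conditions (i)--(iii).

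For $n=2r$ the Hilton--Milner machinery is unavailable, so I would instead exploit the degeneracy. Every maximal intersecting family contains exactly one set from each complementary pair, so $|\mathcal{A}|=\tfrac12\binom{2r}{r}=\binom{2r-1}{r-1}$ for every MLCIF, and the unique $r$-set disjoint from $X$ is its complement $\overline X:=[2r]\setminus X$. Hence $\hit_X(\mathcal{A})=\binom{2r-1}{r-1}-[\overline X\in\mathcal{A}]$, and since $1\in\overline X$ we have $\overline X\in\mathcal{S}$, giving $\hit_X(\mathcal{S})=\binom{2r-1}{r-1}-1$. Consequently $\mathcal{S}$ is optimal for $X$ if and only if every left-compressed MIF contains $\overline X$, equivalently if and only if the down-set $D(X):=\{B\in\binom{[2r]}{r}:B\le X\}$ contains a complementary pair $B,\overline B$ (for then no left-compressed intersecting family can contain $X$, forcing $\overline X$ into every MLCIF). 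A direct count settles this: if $x_i\le 2i-1$ for some $i$, then $B,\overline B\le X$ would each need at least $i$ elements among $\{1,\dots,2i-1\}$, which is impossible, so $D(X)$ has no complementary pair and $\mathcal{S}$ is not optimal; conversely if $X\ge\{2,4,\dots,2r\}$ then $B=\{1,3,\dots,2r-1\}$ and $\overline B=\{2,4,\dots,2r\}$ both lie in $D(X)$, so $\mathcal{S}$ is optimal. For the non-optimal direction one produces an explicit witness: fix a linear extension $\prec$ of the dominance order with $X\prec\overline X$ (possible because $\overline X\not<X$ here) and take $\mathcal{A}=\{A:A\prec\overline A\}$, which is intersecting, contains $X$, and is left-compressed because complementation reverses dominance; this beats $\mathcal{S}$ by one. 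Altogether $\mathcal{S}$ is optimal exactly for $X\ge\{2,4,\dots,2r\}$.

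I expect the main obstacle to lie in the two structural inputs rather than the arithmetic. For $n>2r$ the entire classification rests on the Hilton--Milner \emph{uniqueness} theorem applied within the left-compressed setting, including the genuinely exceptional family $\mathcal{A}_{2,3}$ at $r=3$, together with the precise determination of when a single extremal family has all of its members meeting $X$; because $\hit_X(\mathcal{S})$ lies exactly one below the Hilton--Milner bound, the answer is governed entirely by this one family. For $n=2r$ the delicate points are the equivalence between $D(X)$ containing a complementary pair and the condition $X\ge\{2,4,\dots,2r\}$, and the verification that the family built from a linear extension of dominance is left-compressed, which relies on complementation being an order-reversing involution on $\binom{[2r]}{r}$.
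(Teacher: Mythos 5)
The paper offers no proof of this statement to compare against: Theorem~\ref{borg2} is imported verbatim from Borg~\cite{Borg}, so your argument can only be judged on its own terms, and on those terms it is correct and takes a notably economical route. The engine for $n>2r$ is the observation that $\hit_X(\mathcal{S})=\binom{n-1}{r-1}-\binom{n-r-1}{r-1}$ for every $X\subseteq[2,n]$ with $|X|=r$, exactly one below the Hilton--Milner bound; since every MLCIF other than $\mathcal{S}$ is a maximal intersecting family not contained in a star, Hilton--Milner caps its hitting at $\hit_X(\mathcal{S})+1$, and equality forces it to be an extremal Hilton--Milner family all of whose members meet $X$. Your determinations that $\mathcal{HM}$ has every member meeting $X$ exactly when $X=[2,r+1]$, and $\mathcal{A}_{2,3}$ exactly when $\{2,3\}\subseteq X$, are correct and assemble into (i)--(iii) as claimed. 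The $n=2r$ half is also sound: maximal intersecting families on $\binom{[2r]}{r}$ contain one set from each complementary pair, so optimality of $\mathcal{S}$ is equivalent to no LCIF containing $X$, i.e.\ to the down-set $D(X)$ containing a complementary pair, and your index computation ($x_i\le 2i-1$ for some $i$ versus $x_i\ge 2i$ for all $i$) correctly resolves when that happens; the linear-extension witness is valid since complementation reverses the dominance order. This is a genuinely different and shorter derivation than a direct compression analysis, bought at the price of invoking the full uniqueness clause of Hilton--Milner.

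The one load-bearing step you assert rather than prove is that among all isomorphic copies of the Hilton--Milner extremal families, only $\mathcal{HM}$ itself and, for $r=3$, $\mathcal{A}_{2,3}$ are left-compressed. This is true but needs a line: for a copy $\{A: a\in A,\ A\cap B\neq\emptyset\}\cup\{B\}$, left-compression applied to members containing the centre forces $a=1$, and then, since $B$ is the unique member avoiding $1$ and $[2,r+1]$ is the dominance-minimum $r$-set avoiding $1$, the down-set condition at $B$ forces $B=[2,r+1]$; similarly a left-compressed copy of $\{A:|A\cap T|\ge 2\}$ must have $T=\{1,2,3\}$. Without this verification the characterisation of the exceptional sets $X$ would not be pinned down, so it should be written out even though it is routine.
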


More recently Barber~\cite{Barber} generalised these results by precisely characterising the cases for which the star is optimal for sufficiently large $n$. Observe for this that if $X \subseteq [2, r+1]$ is non-empty then $\hit_X(\mathcal{HM}) = \hit_X(\mathcal{S})+1$, so $\mathcal{S}$ is certainly not optimal for such $X$. 
\begin{theorem}[Barber~\cite{Barber}] \label{barber}
Suppose that $r \geq 3$ and that $n$ is sufficiently large, and fix non-empty $X \subseteq [2,n]$. Then $\mathcal{S}$ is optimal for $X$ if and only if $X \not\subseteq [2, r+1]$ and one of the following statements holds:
\begin{enumerate}[(i)]
\item $|X|=1$;
\item $|X|=2$ and $X \cap \{2, 3\} = \emptyset$;
\item $|X|=3$ and $|X \cap \{2, 3\}| \leq 1$;
\item $|X| \geq 4$.
\end{enumerate}
\end{theorem}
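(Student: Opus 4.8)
The plan rests on a single bookkeeping identity. For any MLCIF $\mathcal{A}$ write $g(\mathcal{A})=|\{A\in\mathcal{A}:1\notin A,\ A\cap X\neq\emptyset\}|$ for the number of non-star members hitting $X$, and $d(\mathcal{A})=|\{B\in\binom{[n]}{r}:1\in B,\ B\cap X\neq\emptyset,\ B\notin\mathcal{A}\}|$ for the number of star sets hitting $X$ that are missing from $\mathcal{A}$. Since the star members of $\mathcal{A}$ hitting $X$ number exactly $\hit_X(\mathcal{S})-d(\mathcal{A})$, we have $\hit_X(\mathcal{A})=\hit_X(\mathcal{S})-d(\mathcal{A})+g(\mathcal{A})$, so $\mathcal{S}$ is optimal for $X$ if and only if $g(\mathcal{A})\le d(\mathcal{A})$ for every MLCIF $\mathcal{A}$. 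The whole theorem thus reduces to comparing these two quantities, and the guiding principle is that a non-star member $A$ hitting $X$ is ``cheap'' precisely when the star sets it forces out (directly or through compression) mostly miss $X$, which happens when $A$ absorbs $X$ into a region of small coordinates.

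For the necessity direction I would exhibit, for each $X$ not on the list, an MLCIF with $g>d$. When $X\subseteq[2,r+1]$ the Hilton--Milner family does this: its unique non-star member $[2,r+1]$ contains $X$, so $g(\mathcal{HM})\ge1$, while every star set it forces out is disjoint from $[2,r+1]\supseteq X$ and hence misses $X$, giving $d(\mathcal{HM})=0$. For the two remaining bad families---$|X|=2$ with $X\cap\{2,3\}\neq\emptyset$, and $|X|=3$ with $\{2,3\}\subseteq X$---I would use $\mathcal{A}_{2,3}$. Its non-star members are exactly the sets containing $\{2,3\}$ and avoiding $1$, all of which hit $X$, so $g(\mathcal{A}_{2,3})=\binom{n-3}{r-2}$; the star sets it forces out are those $\{1\}\cup S$ with $S\cap\{2,3\}=\emptyset$, and since $X\subseteq\{2,3,\max X\}$ such a set can hit $X$ only via $\max X$, whence $d(\mathcal{A}_{2,3})\le\binom{n-4}{r-2}<\binom{n-3}{r-2}=g(\mathcal{A}_{2,3})$.

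For the sufficiency direction I must show $g(\mathcal{A})\le d(\mathcal{A})$ for all MLCIF $\mathcal{A}$ whenever $X$ is admissible. The cases $|X|>r$ and $|X|=r$ are handed to Borg (Theorem~\ref{borg}(i) and Theorem~\ref{borg2}), and one checks these already agree with the stated characterisation; the genuinely new cases have $|X|<r$. Here the decisive reduction is Borg's monotonicity (Theorem~\ref{borg}(ii)): for each fixed size the admissible sets form an up-set, so it suffices to prove optimality for the $\le$-minimal admissible set of each size. I would verify that these minima are $\{r+2\}$, $\{4,r+2\}$, $\{2,4,r+2\}$ and $\{2,3,\dots,s,r+2\}$ for $4\le s\le r-1$---each with maximum element exactly $r+2$ and smallest possible remaining elements---so that all of sufficiency comes down to this short, explicit list.

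It remains to prove $g\le d$ for those minimal sets, and this is where I expect the real work. The crude injection $A\mapsto(A\setminus\{\max A\})\cup\{1\}$ only yields $\hit_X(\mathcal{A})\le|\{A\in\mathcal{A}:1\in A\}|$, which is far too weak, so one must use compression more seriously: a non-star member $A$ hitting $X$ forces out of $\mathcal{A}$ every star set $B$ hitting $X$ for which some $B'\le B$ is disjoint from $A$, since $B'\in\mathcal{A}$ would contradict the intersecting property. Because $\max X=r+2$ is large and $|X|<r$ leaves a free coordinate, there are enough such $B$ to charge the members counted by $g$ injectively against distinct members counted by $d$. The main obstacle is to turn this per-member forcing into one global injection valid for every MLCIF simultaneously, and in particular to handle non-star members $A$ with $X\subseteq A$, where the forced-out hitting star sets appear only indirectly, after compressing $B$ downwards past $A$; throughout, the hypothesis that $n$ is large is used to guarantee enough room outside the small sets involved to realise these constructions.
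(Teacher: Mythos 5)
Your necessity direction is complete and correct: the computations for $\mathcal{HM}$ when $X\subseteq[2,r+1]$ and for $\mathcal{A}_{2,3}$ in the two remaining excluded cases match the counts the paper itself uses (the remark before the theorem statement, and the case analysis for $|X|\in\{2,3\}$ in the proof of Theorem~\ref{mainresult}). Your reduction of the sufficiency direction, via Theorem~\ref{borg}(ii), to the $\leq$-minimal admissible set of each size is also sound, and the minimal sets you list are the right ones. But the heart of the sufficiency direction --- proving that $g(\mathcal{A})\le d(\mathcal{A})$ for \emph{every} MLCIF $\mathcal{A}$ when $X$ is one of $\{r+2\}$, $\{4,r+2\}$, $\{2,4,r+2\}$, $\{2,\dots,s,r+2\}$ --- is not proved; it is only described as a plan, and you yourself flag the obstacles (constructing a single global injection valid for all $\mathcal{A}$ simultaneously, and handling non-star members $A$ with $X\subseteq A$, where no hitting star set is directly disjoint from $A$). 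Note that Theorem~\ref{borg}(iii) does not cover these minimal sets (e.g.\ $\{r+2\}\not\geq\{2r\}$ for $r\geq 3$), so this step cannot be delegated to Borg; it is exactly where Barber's theorem goes beyond Borg's, and as it stands your argument has a genuine gap there.

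The paper closes this gap by an entirely different mechanism: generating families. Lemma~\ref{canon} shows every MLCIF is generated by an antichain in $\mathcal{P}([2r])$, so it has at most $2^{2r}$ generators; Proposition~\ref{nocontenders} then rules out all MLCIFs of rank at least three by a crude count ($2^{2r}\binom{n}{r-3}<\binom{n-2}{r-2}$ for large $n$), Propositions~\ref{size2gen} and~\ref{ahmbest}(i) show that when $X\not\subseteq[2,r+1]$ the only rank-two family that can compete with $\mathcal{S}$ is $\AHM_3$, and the final step is a direct comparison of $\hit_X(\mathcal{S})$ with $\hit_X(\AHM_3)$ using explicit binomial coefficients. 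This replaces the quantifier over all MLCIFs --- the source of your difficulty --- by a comparison against a single explicit family, at the cost of requiring $n$ large (exponentially in $r$, as the paper notes). If you want to salvage your approach, the most economical route is to import Proposition~\ref{nocontenders} and Proposition~\ref{ahmbest}(i), after which your bookkeeping identity only needs to be verified for $\mathcal{A}=\AHM_3$, where $g(\AHM_3)=|\{A:\{2,3\}\subseteq A,\,1\notin A,\,A\cap X\neq\emptyset\}|$ and $d(\AHM_3)=|\{B:1\in B,\,B\cap\{2,3\}=\emptyset,\,B\cap X\neq\emptyset\}|$ can be compared directly for each of your minimal sets.
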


Addressing the cases where $\mathcal{S}$ is not optimal for $X$, Barber posed the following question.

\begin{question} \label{barberqu} Is there a short list of families, one of which is optimal for every $X$?
\end{question}

That is, can we find a small collection of MLCIFs $\mathcal{F}$, such that for every $X \subseteq [n]$, there exists $\mathcal{A} \in \mathcal{F}$ such that $\mathcal{A}$ is optimal for $X$? The main result of this paper answers this question in the affirmative for sufficiently large $n$; our list consists of the star $\mathcal{S}$ along with a class of families $\AHM_t$ for $3 \leq t \leq r+1$ which includes the families $\mathcal{A}_{2, 3}$ and $\mathcal{HM}$ introduced previously. 
Specifically, (in the context of fixed integers $n \geq r$,) for each $t \in [n]$ we define $$\AHM_t:=\{A \in \mathcal{S}:A \cap [2,t] \neq \emptyset\} \cup \{A \in \binom{[n]}{r}: [2,t] \subseteq A\}$$
and call $\AHM_t$ the \emph{$t$-adjusted Hilton-Milner} family.
It is easy to check that $\AHM_t$ is a left-compressed intersecting family for every~$t \geq 3$. Furthermore, for $3 \leq t \leq r+1$ the family $\AHM_t$ is in fact an MLCIF  (see Proposition~\ref{AHMtmax}). Observe in particular that $\mathcal{HM}=\AHM_{r+1}$ and that $\mathcal{A}_{2,3}=\AHM_3$. We can now formally state our main result.

\begin{theorem} \label{mainresult}
Suppose that $r \geq 3$ and that $n$ is sufficiently large, and fix a non-empty subset $X \subseteq [2,n]$.  
\begin{enumerate}[(a)]
\item \label{1} If $X=\{2\}$ then $\AHM_3$ is optimal for $X$.
\item \label{2} If $|X|=2$ and $X \cap \{2,3\}\neq \emptyset$ then $\AHM_3$ is optimal for $X$. Furthermore, if also $4 \in X$, then $\AHM_4$ is simultaneously optimal for $X$.
\item \label{3} If $|X|=3$ and $\{2,3\} \subseteq X$ then $\AHM_3$ is optimal for $X$. Furthermore, if also $X=\{2,3,4\}$, then $\AHM_4$ is simultaneously optimal for $X$.
\item \label{4} If $X \subseteq [2,r+1]$ and $X$ is not as in (a)--(c), then $\AHM_m$ is optimal for $X$, where $m := \max X$.
\end{enumerate}
No other MLCIFs are optimal for $X$ as in (a)--(d), and for every other $X \subseteq [2,n]$ the star $\mathcal{S}$ is the unique optimal MLCIF for $X$.
\end{theorem}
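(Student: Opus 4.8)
\subsection*{Proof proposal}

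The plan is to reduce the whole problem to an optimisation over the members of an MLCIF that avoid the element~$1$. Given an MLCIF $\mathcal{A}$, write $\mathcal{A} = \mathcal{A}^1 \cup \mathcal{A}^0$ where $\mathcal{A}^1 = \{A \in \mathcal{A} : 1 \in A\} \subseteq \mathcal{S}$ and $\mathcal{A}^0 = \{A \in \mathcal{A} : 1 \notin A\}$. Since $\mathcal{A}$ is a maximal intersecting family, $\mathcal{A}^1$ consists of exactly those sets through~$1$ meeting every member of $\mathcal{A}^0$, so $\mathcal{A}$ is determined by $\mathcal{A}^0$. Comparing directly with $\mathcal{S}$ and using that $X \subseteq [2,n]$ gives the exact identity
\[ \hit_X(\mathcal{A}) - \hit_X(\mathcal{S}) = \hit_X(\mathcal{A}^0) - \mathrm{def}_X(\mathcal{A}^0), \]
where $\mathrm{def}_X(\mathcal{A}^0) := |\{A : 1 \in A,\ A \cap X \neq \emptyset,\ A \cap B = \emptyset \text{ for some } B \in \mathcal{A}^0\}|$ counts the $X$-hitting sets through~$1$ that maximality forces out of~$\mathcal{A}$. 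I would then record the structural constraints on $\mathcal{A}^0$: it is left-compressed inside $\binom{[2,n]}{r}$, and analysing the shifts $S_{i1}$ shows that left-compressedness of $\mathcal{A}$ forces $\mathcal{A}^0$ to be \emph{$2$-intersecting} (any $B, B' \in \mathcal{A}^0$ satisfy $|B \cap B'| \geq 2$, since otherwise replacing the unique common element of some $B$ by~$1$ yields a set missing $B'$). A short compression argument then shows the common core $\bigcap_{B \in \mathcal{A}^0} B$ is an initial segment $[2,t]$, and that the families with $\mathcal{A}^0 = \{B \in \binom{[2,n]}{r} : [2,t] \subseteq B\}$ are precisely the non-$1$ parts of $\AHM_t$ (so $2$-intersection forces $t \geq 3$, explaining that range).

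Next I would evaluate the benchmark by applying the identity to $\mathcal{A} = \AHM_t$, expressing $\hit_X(\AHM_t) - \hit_X(\mathcal{S})$ as a difference of binomial coefficients and splitting on whether $X \subseteq [2,t]$. When $X \subseteq [2,t]$ the deficiency vanishes, because every $X$-hitting set through~$1$ already meets $[2,t]$, and the value equals $\binom{n-t}{r-t+1} = \binom{n-t}{n-r-1}$, which is strictly decreasing in~$t$; hence among such families the best is $t = \max\{3, \max X\}$. When $X \not\subseteq [2,t]$ a deficiency term of order $n^{r-2}$ appears and dominates the gain for large~$n$. Comparing these values across~$t$ and against the star yields the positive statements (a)--(d), including the ties between $\AHM_3$ and $\AHM_4$ in (b) and (c) (where the two binomial expressions both collapse to $\binom{n-4}{r-3}$) and the fact that $\mathcal{S}$ beats every $\AHM_t$ for the remaining~$X$, recovering Theorem~\ref{barber}.

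The heart of the argument, and the step I expect to be the main obstacle, is the upper bound: that no admissible $\mathcal{A}^0$ beats the best $\AHM_t$ (or the star). The difficulty is that $2$-intersecting left-compressed families are not merely the core families $\{B : [2,t] \subseteq B\}$; there are also Ahlswede--Khachatrian-type families such as $\{B : |B \cap [2,5]| \geq 3\}$, which have empty common core yet are $2$-intersecting, and all of these must be excluded. The mechanism I would exploit is that any member $B \in \mathcal{A}^0$ failing to contain the full core $[2,t]$ is disjoint from a family of order $n^{r-2}$ of $X$-hitting sets through~$1$, so such configurations incur a deficiency that swamps the at-most-one unit each member contributes to $\hit_X(\mathcal{A}^0)$; making this quantitative means bounding $\mathrm{def}_X(\mathcal{A}^0)$ from below in terms of $\hit_X(\mathcal{A}^0)$ and the core, and checking that enlarging $\mathcal{A}^0$ beyond a core family never increases $\hit_X(\mathcal{A}^0) - \mathrm{def}_X(\mathcal{A}^0)$. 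I expect the genuinely hard regime to be small $m = \max X$ (in particular $m \in \{3,4\}$, covering (a)--(c) and the small cases of (d)): there the benchmark $\binom{n-m}{r-m+1}$ is itself of order $n^{r-2}$, the same order as the competitors' deficiencies, so the comparison turns on constants rather than on the power of~$n$ and an essentially exact accounting is required. This same matching lower-order analysis is what forces the equality cases and hence the uniqueness claim that no other MLCIFs are optimal, whereas for large~$m$ the benchmark is only $O(1)$ and any competitor with positive deficiency is eliminated immediately.
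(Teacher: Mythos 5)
Your reduction to the trace $\mathcal{A}^0=\{A\in\mathcal{A}:1\notin A\}$ is a genuinely different route from the paper, which instead works with Ahlswede--Khachatrian generating families supported on $[2r]$ (Lemma~\ref{canon}), disposes of rank $\geq 3$ crudely (Proposition~\ref{nocontenders}), classifies the rank-two MLCIFs (Proposition~\ref{size2gen}) and then compares generator by generator (Proposition~\ref{ahmbest}). The pieces you do establish are correct: the identity $\hit_X(\mathcal{A})-\hit_X(\mathcal{S})=\hit_X(\mathcal{A}^0)-\mathrm{def}_X(\mathcal{A}^0)$ holds for maximal intersecting families, $\mathcal{A}^0$ is indeed $2$-intersecting and left-compressed with core an initial segment, and your benchmark values $\binom{n-t}{r-t+1}$ (for $X\subseteq[2,t]$) and $\binom{n-4}{r-3}$ (for $\AHM_3$ against $X$ with $\max X\geq 4$) match the paper's computations, so the positive halves of (a)--(d), including the $\AHM_3/\AHM_4$ ties, would follow once all competitors are eliminated.

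The elimination step, however, is exactly the content of the theorem and you leave it as a declared ``main obstacle'' with only a heuristic mechanism; as described it does not yet close. Two concrete problems: (1) $\mathrm{def}_X(\mathcal{A}^0)$ is the size of a \emph{union} over $B\in\mathcal{A}^0$ of excluded sets, so the fact that each ``bad'' member individually kills $\Theta(n^{r-2})$ sets through $1$ gives no lower bound that accumulates with the number of bad members --- the excluded sets can coincide almost entirely, and since the benchmark advantage for small $\max X$ is itself $\Theta(n^{r-2})$, the comparison really does hinge on constants that are never computed. Moreover your claim that for large $\max X$ ``any competitor with positive deficiency is eliminated immediately'' is not justified: a competitor could in principle have $\hit_X(\mathcal{A}^0)$ exceed $\mathrm{def}_X(\mathcal{A}^0)$ by more than the $O(1)$ benchmark, and ruling this out again needs the structure of $\mathcal{A}^0$. (2) The proposed monotonicity check, that ``enlarging $\mathcal{A}^0$ beyond a core family never increases $\hit_X-\mathrm{def}_X$,'' is not well posed: an admissible $\mathcal{A}^0$ with core $[2,t]$ is a \emph{subfamily} of $\{B:[2,t]\subseteq B\}$, not a superfamily of any core family, so there is no monotone path from the benchmarks to the competitors along which to track the increment. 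Some classification of the admissible traces (equivalently of the MLCIFs of rank at most two, which is what Proposition~\ref{size2gen} supplies, together with the two-part estimate of Proposition~\ref{ahmbest}) appears unavoidable, and without it both the uniqueness assertions and the claim that $\mathcal{S}$ wins outside (a)--(d) remain unproven.
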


In particular, the only non-empty sets $X \subseteq [n]$ for which there is not a unique optimal MLCIF are $\{2, 4\}$, $\{3, 4\}$ and $\{2, 3, 4\}$. Our proof of Theorem~\ref{mainresult} follows the approach of Barber, which in turn developed the work of Ahlswede and Khachatrian~\cite{AK} on generating families. In particular we use Barber's key observation that every MLCIF can be generated by a collection of subsets of $[2r]$ to narrow down the possible MLCIFs for a set $X$ to a collection small enough to compare against each other. We introduce generating families and this key result in the next section, before giving the proof of Theorem~\ref{mainresult} in Section~3 and concluding with some further remarks and questions in Section~4.

\subsection{Notation}

For integers $r \leq n$, we write $[n] := \{1, \dots, n\}$ and $[r, n] := \{r, r+1, \dots, n\}$; for $r > n$ we consider $[r,n]$ to be empty. Given a set $X$ we use $\binom{X}{r}$ to denote the family of all subsets of $X$ of size $r$ and $\mathcal{P}(X)$ to denote the set of all subsets of~$X$.

\section{Generating Families}

Fix integers $1 \leq r \leq n$, and let $\mathcal{G}$ be a collection of subsets of $[n]$. Then the family $\langle \mathcal{G} \rangle _{n,r}$ {\it generated by $\mathcal{G}$ with respect to $n$ and $r$} is given by $\langle \mathcal{G} \rangle_{n,r}:= \{ A \in \binom{[n]}{r} : A\supseteq G \mbox{~for some~} G \in \mathcal{G}\}$; we omit the subscripts and write simply $\langle \mathcal{G} \rangle$ when $n$ and $r$ are clear from the context. We call $\mathcal{G}$ a {\it generating family} of $\langle \mathcal{G} \rangle$. Observe that members of $\mathcal{G}$ of size greater than $r$ do not contribute to $\langle \mathcal{G} \rangle$. Every family $\mathcal{A} \in \binom{[n]}{r}$ is a generating family of itself, but many families $\mathcal{A}$ admit more concise generating families. For example, we have $\mathcal{S}=\langle \{\{1\}\} \rangle$, $\mathcal{A}_{2,3}=\langle \{\{1,2\},\{1,3\},\{2,3\}\} \rangle$, $\mathcal{HM}=\langle \{\{1,i\}: 2 \leq i \leq r+1\} \cup \{[2,r+1]\} \rangle$ and $\AHM_t=\langle\{\{1,i\}:2 \leq i \leq t\} \cup \{[2,t]\}\rangle$. 

The following key observation of Ahlswede and Khachatrian motivates this definition for working with intersecting families.

\begin{theorem}[Ahlswede-Khachatrian \cite{AK}] \label{intersecting} Suppose that $n \geq 2r$ and that $\mathcal{G} \subseteq \mathcal{P}([n])$ has $|G| \leq r$ for every $G \in \mathcal{G}$. Then $\mathcal{G}$ is intersecting if and only if $\langle \mathcal{G} \rangle$ is intersecting.
\end{theorem}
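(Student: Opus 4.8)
The plan is to prove the two implications separately, noting that one direction is essentially immediate while the other carries all the content.

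First I would dispatch the forward direction, that $\mathcal{G}$ intersecting implies $\langle \mathcal{G} \rangle$ intersecting. Given any two members $A_1, A_2 \in \langle \mathcal{G} \rangle$, by definition there are generators $G_1, G_2 \in \mathcal{G}$ with $G_1 \subseteq A_1$ and $G_2 \subseteq A_2$. Since $\mathcal{G}$ is intersecting we have $G_1 \cap G_2 \neq \emptyset$, and hence $A_1 \cap A_2 \supseteq G_1 \cap G_2 \neq \emptyset$. Thus any two members of $\langle \mathcal{G} \rangle$ meet, as required. Note that this direction does not use the hypothesis $n \geq 2r$ at all.

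For the reverse direction I would argue by contraposition: assuming $\mathcal{G}$ is \emph{not} intersecting, I would produce two disjoint members of $\langle \mathcal{G} \rangle$, showing $\langle \mathcal{G} \rangle$ is not intersecting. Since $\mathcal{G}$ is not intersecting there exist $G_1, G_2 \in \mathcal{G}$ with $G_1 \cap G_2 = \emptyset$. The idea is to extend each $G_i$ to an $r$-set $A_i \supseteq G_i$ while keeping $A_1$ and $A_2$ disjoint; then $A_1, A_2 \in \langle \mathcal{G} \rangle$ witness that $\langle \mathcal{G} \rangle$ is not intersecting. To carry this out, let $U := [n] \setminus (G_1 \cup G_2)$ be the set of elements available for the extension. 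We must adjoin $r - |G_1|$ elements to $G_1$ and $r - |G_2|$ elements to $G_2$, all distinct and drawn from $U$ (both quantities are nonnegative since $|G_i| \leq r$). The total number of elements required is $(r - |G_1|) + (r - |G_2|) = 2r - |G_1| - |G_2|$, whereas $|U| = n - |G_1| - |G_2|$, using $G_1 \cap G_2 = \emptyset$. Since $n \geq 2r$ we have $|U| \geq 2r - |G_1| - |G_2|$, so there are enough free elements to choose the two extensions disjointly, for instance greedily.

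The only nontrivial point — and the place where the hypothesis $n \geq 2r$ is essential — is this extension step in the reverse direction: one needs enough room outside $G_1 \cup G_2$ to grow two disjoint generators into two disjoint $r$-sets, and the counting above makes the required inequality precise. I do not expect any genuine difficulty beyond this bookkeeping, but I would include the brief remark that without the bound $n \geq 2r$ the statement can fail: when $r > n/2$ any two $r$-sets meet (as $|A_1| + |A_2| = 2r > n$), so $\langle \mathcal{G} \rangle$ is always intersecting even when $\mathcal{G}$ contains two disjoint small sets, confirming that the hypothesis cannot simply be dropped.
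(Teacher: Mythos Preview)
Your argument is correct. Both implications are handled properly, and the counting in the reverse direction is exactly what is needed; the hypothesis $|G|\le r$ ensures $r-|G_i|\ge 0$ and the hypothesis $n\ge 2r$ ensures enough room in $U=[n]\setminus(G_1\cup G_2)$ to extend the two disjoint generators to two disjoint $r$-sets.

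However, there is nothing to compare against: the paper does not supply its own proof of this statement. It is quoted as a result of Ahlswede and Khachatrian~\cite{AK} and used as a black box. What you have written is the standard short proof of this fact, and it would serve perfectly well as a self-contained justification in place of the citation.
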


Since there may be many different generating families for an MLCIF on $\binom{[n]}{r}$, it is helpful to define a single canonical generating family of each such family. For an MLCIF $\mathcal{A} \subseteq \binom{[n]}{r}$ we do this as follows. First, we say that a set $G \subseteq [n]$ is a \emph{potential generator} of $\mathcal{A}$ if for every $A \in \binom{[n]}{r}$ with $G \subseteq A$ we have $A \in \mathcal{A}$. We then define the \emph{canonical generating family} $\mathcal{G}$ of $\mathcal{A}$ to be the set of all minimal potential generators of $\mathcal{A}$ (where minimality is with respect to inclusion), and we call the elements of $\mathcal{G}$ the \emph{generators} of~$\mathcal{A}$. Observe that since every element of $\mathcal{A}$ is a potential generator of $\mathcal{A}$, the canonical generating family $\mathcal{G}$ of $\mathcal{A}$ is indeed a generating family of $\mathcal{A}$. Also note that by definition $\mathcal{G}$ must be an \emph{antichain}, meaning that no element of $\mathcal{G}$ is a proper subset of another element of $\mathcal{G}$. Our next proposition establishes the key property that $\mathcal{G}$ is supported on the first $2r$ elements of~$[n]$, and is in fact essentially unique in having this property (the existence of a generating family with this property can also be obtained from results of Barber~\cite{Barber}; see Lemma 8 and the discussion preceding it).

\begin{lemma} \label{canon}
Fix integers $n \geq r$, let $\mathcal{A}$ be an MLCIF on $\binom{[n]}{r}$, and let $\mathcal{G}$ be the canonical generating family of $\mathcal{A}$. Then $G \subseteq [2r]$ for every $G \in \mathcal{G}$. Furthermore, if $n \geq 3r$ then~$\mathcal{G}$ is the only generating family of $\mathcal{A}$ which is an antichain each of whose members is a subset of $[2r]$.
\end{lemma}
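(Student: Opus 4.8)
The plan is to recast potential generators as transversals. For a set $G$ with $|G| \le r$ and $n \ge 2r$, I would first establish that $G$ is a potential generator of $\mathcal{A}$ if and only if $G$ meets every member of $\mathcal{A}$; consequently the potential generators of size at most $r$ are exactly the transversals of $\mathcal{A}$, and the elements of $\mathcal{G}$ are the minimal transversals. The forward direction is immediate: if some $A \in \mathcal{A}$ had $A \cap G = \emptyset$ then, as $n \ge 2r$, we could extend $G$ to an $r$-set $B \supseteq G$ disjoint from $A$, and $B \in \mathcal{A}$ (since $G$ is a potential generator) would contradict that $\mathcal{A}$ is intersecting. Conversely, if $G$ meets every member and $B \supseteq G$ is an $r$-set, then $B$ also meets every member, so $B \in \mathcal{A}$ because $\mathcal{A}$ is a \emph{maximal} intersecting family. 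Throughout I would restrict to generators of size at most $r$, since sets of larger size contribute nothing to $\langle \cdot \rangle$.

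For the first assertion, suppose $G \in \mathcal{G}$ contains some $g > 2r$ (possible only when $n > 2r$). As $G$ is a minimal transversal, $G \setminus \{g\}$ is not a transversal, so some $A \in \mathcal{A}$ has $A \cap (G \setminus \{g\}) = \emptyset$; since $G$ itself is a transversal this forces $A \cap G = \{g\}$, and in particular $g \in A$. Now $|A \cup G| = r + |G| - 1 \le 2r - 1$, so there is an element $j \in [2r] \setminus (A \cup G)$, and $j \le 2r < g$. Replacing $g$ by $j$ gives $A' := (A \setminus \{g\}) \cup \{j\}$ with $A' \le A$, whence $A' \in \mathcal{A}$ as $\mathcal{A}$ is left-compressed; but $(A \setminus \{g\}) \cap G = \emptyset$ and $j \notin G$, so $A' \cap G = \emptyset$, contradicting that $G$ is a transversal. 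Hence $G \subseteq [2r]$ for every $G \in \mathcal{G}$.

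For uniqueness, let $\mathcal{G}'$ be any antichain of subsets of $[2r]$ of size at most $r$ with $\langle \mathcal{G}' \rangle = \mathcal{A}$, and I would prove $\mathcal{G}' = \mathcal{G}$ by sandwiching. First, each $G' \in \mathcal{G}'$ is a potential generator of $\mathcal{A}$ (all its $r$-supersets lie in $\langle \mathcal{G}' \rangle$), hence a transversal, so it contains some $G \in \mathcal{G}$ with $G \subseteq G'$. Second — and this is where $n \ge 3r$ enters — fix $G \in \mathcal{G}$ with $|G| = s \le r$; since $|[2r+1,n]| = n - 2r \ge r \ge r - s$, I can pick an $r$-set $B \supseteq G$ whose $r - s$ elements outside $G$ all exceed $2r$, so that $B \cap [2r] = G$. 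As $B \in \mathcal{A} = \langle \mathcal{G}' \rangle$, some $G' \in \mathcal{G}'$ satisfies $G' \subseteq B$, and $G' \subseteq [2r]$ then yields $G' \subseteq B \cap [2r] = G$. Combining: for $G \in \mathcal{G}$ the second step gives $G' \in \mathcal{G}'$ with $G' \subseteq G$, and the first step applied to $G'$ gives $\tilde G \in \mathcal{G}$ with $\tilde G \subseteq G' \subseteq G$; as $\mathcal{G}$ is an antichain $\tilde G = G$, forcing $G = G' \in \mathcal{G}'$, so $\mathcal{G} \subseteq \mathcal{G}'$. The symmetric argument gives $\mathcal{G}' \subseteq \mathcal{G}$, hence $\mathcal{G}' = \mathcal{G}$.

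The main obstacle is the first assertion, and the crux is the reformulation of potential generators as transversals: once that is in hand, a single downward shift of a member meeting $G$ only in a too-large element produces a member disjoint from $G$, which is the required contradiction. The only delicate point in the uniqueness part is guaranteeing enough room above $2r$ to realise an $r$-superset of $G$ meeting $[2r]$ in exactly $G$, which is precisely what $n \ge 3r$ provides. I would also flag that one should restrict to generating families of sets of size at most $r$, since members of larger size neither contribute to $\langle \cdot \rangle$ nor are seen by this comparison.
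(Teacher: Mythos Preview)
Your proof is correct. The transversal characterisation you establish at the outset (for $|G|\le r$ and $n\ge 2r$, $G$ is a potential generator of $\mathcal{A}$ iff $G$ meets every member of $\mathcal{A}$) is a clean reformulation that the paper does not make explicit, and it streamlines both halves of the argument.

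For the first assertion the two proofs diverge. The paper works with $X:=G\cap[2r]$, uses non-potentiality of $X$ to find some $r$-set $B\supseteq X$ outside $\mathcal{A}$, invokes maximality to find $C\in\mathcal{A}$ disjoint from $B$, and then constructs a set $D\le A$ inside $[2r]$ with $D\cap C=\emptyset$. Your route is shorter: minimality of $G$ gives directly an $A\in\mathcal{A}$ with $A\cap G=\{g\}$, and a single downward shift of $A$ at the offending coordinate $g$ yields a member disjoint from $G$. The transversal viewpoint lets you avoid the intermediate sets $B,C$ entirely.

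For uniqueness both arguments hinge on the same idea---extend a putative generator by elements above $2r$ and use $\mathcal{G}'\subseteq\mathcal{P}([2r])$ to force containment---but the organisation differs: the paper argues by contradiction on the minimal size at which $\mathcal{G}_1$ and $\mathcal{G}_2$ differ, whereas you set up a two-sided sandwich $\mathcal{G}\subseteq\mathcal{G}'\subseteq\mathcal{G}$ via the antichain property. Your flag about restricting to generating sets of size at most $r$ is well taken; the paper's proof also tacitly assumes $|G|\le r$ (it forms a set of size $r$ by adding ``the $r-|G|$ largest elements'' to $G$), and without this restriction the uniqueness claim would be false as stated.
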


\begin{proof}
To prove the first statement, suppose for a contradiction that there exists $G \in \mathcal{G}$ with $G \not\subseteq [2r]$. Then the set $X := G \cap [2r]$ is a proper subset of $G$. Let $A$ be the set consisting of the elements of $G$ and the $r-|G|$ largest elements of $[n] \setminus G$, so $|A| = r$ and we have $A \in \mathcal{A}$ since $G \subseteq A$ and $G$ is a generator of $\mathcal{A}$. Furthermore, since $G$ is a minimal potential generator of $\mathcal{A}$, the set $X$ is not a potential generator of $\mathcal{A}$, meaning that there exists a set $B \in \binom{[n]}{r} \setminus \mathcal{A}$ with $X \subseteq B$. Now, since $\mathcal{A}$ is a maximal intersecting family, there must exist a set $C \in \mathcal{A}$ with $C \cap B = \emptyset$ (as otherwise we could add $B$ to~$\mathcal{A}$). It follows that $C \cap X = \emptyset$. Choose any set $Z \subseteq [2r] \setminus (X \cup C)$ with $|Z| = r-|X|$ (this is possible since $|C| = r$ so $[2r] \setminus (X \cup C)$ has size at least $r-|X|$). Then $D := X \cup Z$ is a set of size $r$. Moreover our choices of $A$ and $Z$ ensure that $D \leq A$, so the fact that $A \in \mathcal{A}$ and $\mathcal{A}$ is left-compressed implies $D \in \mathcal{A}$. However, $D \cap C = \emptyset$, contradicting the fact that $\mathcal{A}$ is intersecting. 

For the second statement, since $\mathcal{G}$ is an antichain, it suffices to prove that for $n \geq 3r$ there do not exist two distinct generating families $\mathcal{G}_1$ and $\mathcal{G}_2$ of $\mathcal{A}$ which are both antichains such that every $G \in \mathcal{G}_1 \cup \mathcal{G}_2$ has $G \subseteq [2r]$. Suppose for a contradiction that such families exist, and let $i$ be minimal such that $\mathcal{G}_1 \cap \binom{[2r]}{i} \neq \mathcal{G}_2 \cap \binom{[2r]}{i}$. Assume without loss of generality that there exists $A \in \mathcal{G}_1 \cap \binom{[2r]}{i}$ with $A \notin \mathcal{G}_2$. 
Since $A \in \mathcal{G}_1$ we have $T := A \cup \{n-r+i+1, \dots, n\} \in \mathcal{A}$, so there must exist $B \in \mathcal{G}_2$ with $B \subseteq T$. However, since $A \notin \mathcal{G}_2$ we have $B \neq A$, whilst by minimality of $i$ and the fact that $\mathcal{G}_1$ is an antichain we cannot have $B \subsetneq A$. It follows that $B \not\subseteq A$, that is, $B \cap \{n-r+i+1, \dots, n\} \neq \emptyset$. However, for $n \geq 3r$ we then have $B \not\subseteq [2r]$, contradicting our assumption on $\mathcal{G}_2$.
\end{proof}

We define the \emph{rank} of an MLCIF $\mathcal{A} \subseteq \binom{[n]}{r}$ to be the smallest size of a generator of~$\mathcal{A}$ (this is well-defined since the generators are the members of the canonical generating family). Clearly $\mathcal{S}$ is the unique MLCIF of rank one. The following proposition plays a key role in the proof of our main theorem by showing that when identifying optimal MLCIFs for a non-empty set $X$ we need only consider MLCIFs of rank one or two; MLCIFs of larger rank simply cannot generate enough sets to be optimal.

\begin{proposition} \label{nocontenders}
Fix $r$ and let $n$ be sufficiently large. For every non-empty $X \subseteq [2,n]$, every MLCIF which is optimal for $X$ has rank one or two. 
\end{proposition}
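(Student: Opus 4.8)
The plan is to show that for every MLCIF $\mathcal{A}$ of rank at least three and every non-empty $X\subseteq[2,n]$ we have $\hit_X(\mathcal{A})<\hit_X(\mathcal{S})$ once $n$ is large; since $\mathcal{S}$ is itself an MLCIF (indeed the unique one of rank one), this immediately rules $\mathcal{A}$ out as an optimal family. The point is that a rank at least three family is simply too small to compete: I will bound $\hit_X(\mathcal{A})$ crudely by $|\mathcal{A}|$ and show that this is of smaller order in $n$ than a lower bound on $\hit_X(\mathcal{S})$ that holds uniformly over all non-empty $X$. Thus the argument splits into an upper bound on $|\mathcal{A}|$ and a lower bound on $\hit_X(\mathcal{S})$.

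For the upper bound, let $\mathcal{G}$ be the canonical generating family of $\mathcal{A}$, so by Lemma~\ref{canon} every $G\in\mathcal{G}$ satisfies $G\subseteq[2r]$; in particular $|\mathcal{G}|\le 2^{2r}$. Only generators of size at most $r$ contribute to $\langle\mathcal{G}\rangle=\mathcal{A}$, and since $\mathcal{A}$ has rank at least three, each such generator has size between $3$ and $r$. Every $A\in\mathcal{A}$ contains some generator $G$, and the number of $r$-sets containing a fixed $G$ with $|G|\ge 3$ is $\binom{n-|G|}{r-|G|}\le\binom{n-3}{r-3}$. A union bound over the at most $2^{2r}$ generators therefore gives
\[\hit_X(\mathcal{A})\le|\mathcal{A}|\le 2^{2r}\binom{n-3}{r-3},\]
which is $O(n^{r-3})$ for fixed $r$.

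For the lower bound I would fix any $x\in X$, which is possible as $X$ is non-empty. Every $r$-set of the form $\{1,x\}\cup Y$ with $Y\in\binom{[n]\setminus\{1,x\}}{r-2}$ lies in $\mathcal{S}$ and hits $X$, so
\[\hit_X(\mathcal{S})\ge\binom{n-2}{r-2}.\]
Comparing the two estimates, the ratio $\binom{n-2}{r-2}/\binom{n-3}{r-3}=(n-2)/(r-2)$ grows linearly in $n$, so for $n$ sufficiently large (depending only on $r$) we get $2^{2r}\binom{n-3}{r-3}<\binom{n-2}{r-2}$ and hence $\hit_X(\mathcal{A})<\hit_X(\mathcal{S})$, as required.

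The one point needing care — and essentially the only place the hypothesis ``$X$ non-empty'' enters — is that the lower bound on $\hit_X(\mathcal{S})$ must hold uniformly, including when $X$ is large and $\hit_X(\mathcal{S})$ is close to $|\mathcal{S}|$; fixing a single element $x\in X$ and counting the sets through $\{1,x\}$ handles all cases at once. I expect the remaining work to be routine binomial estimates, so the main conceptual step is simply recognising that the order-of-magnitude gap between $n^{r-2}$ and $n^{r-3}$ already settles the proposition against the star alone, with no need to invoke the $\AHM_t$ families.
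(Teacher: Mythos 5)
Your argument is correct and is essentially the paper's own proof: both bound $\hit_X(\mathcal{A})$ by counting at most $2^{2r}$ generators (via Lemma~\ref{canon}) each contributing $O\bigl(\binom{n}{r-3}\bigr)$ sets, and compare against $\hit_X(\mathcal{S})\ge\binom{n-2}{r-2}$ obtained from the sets containing $\{1,x\}$ for a fixed $x\in X$. The only differences are cosmetic (you use the marginally sharper bound $\binom{n-3}{r-3}$ in place of the paper's $\binom{n}{r-3}$).
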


\begin{proof}
Fix a non-empty set $X \subseteq [2, n]$ and let $\mathcal{A}$ be an MLCIF of rank at least three.
Then by Lemma~\ref{canon} there are at most $2^{2r}$ generators in $\mathcal{G}$, each of which generates at most $\binom{n}{r-3}$ elements of $\mathcal{A}$, so $\hit_X(\mathcal{A}) \leq 2^{2r} \binom{n}{r-3}$. On the other hand, the star $\mathcal{S}$ is an MLCIF with $\hit_X(\mathcal{S}) \geq \binom{n-2}{r-2}$, so for $n$ sufficiently large $\mathcal{A}$ is not optimal for~$X$. 
\end{proof}

Our next lemma shows that the canonical generating family of an MLCIF $\mathcal{A}$ partially inherits the property of being left-compressed, in the sense that the family of generators of smallest size must be left-compressed. Combined with Theorem~\ref{intersecting} this shows that in fact these generators form a left-compressed intersecting family, though not necessarily an MLCIF, as shown e.g.~by $\AHM_4$.

\begin{lemma} \label{compressed}
Fix $n \geq 2r$, let $\mathcal{A}$ be an MLCIF on $\binom{[n]}{r}$, let $\mathcal{G}$ be the canonical generating family of $\mathcal{A}$, and let $k$ be the rank of $\mathcal{A}$. Then the subfamily $\mathcal{G} \cap \binom{[n]}{k}$ is left-compressed.
\end{lemma}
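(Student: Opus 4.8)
The plan is to show that $\mathcal{G} \cap \binom{[n]}{k}$ is closed under every $ij$-shift with $i > j$, which by the definitions in the introduction is exactly what it means for this family of $k$-sets to be left-compressed. So I fix a generator $G \in \mathcal{G}$ with $|G| = k$ together with indices $i > j$ satisfying $i \in G$ and $j \notin G$, set $G' := (G \setminus \{i\}) \cup \{j\}$, and aim to prove that $G' \in \mathcal{G} \cap \binom{[n]}{k}$. Since $j \notin G$ we automatically have $|G'| = k$, so the real content is to establish that $G'$ is a generator of $\mathcal{A}$.

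The key step is to verify that $G'$ is a \emph{potential generator}, i.e. that every $A' \in \binom{[n]}{r}$ with $G' \subseteq A'$ lies in $\mathcal{A}$; I would do this by a short case analysis on whether $i \in A'$. If $i \in A'$, then $A' \supseteq G' \cup \{i\} = (G \setminus \{i\}) \cup \{i, j\} \supseteq G$, so $A' \in \mathcal{A}$ immediately because $G$ is a generator. If instead $i \notin A'$, then (using $j \in G' \subseteq A'$) I set $A := (A' \setminus \{j\}) \cup \{i\}$, which satisfies $A \supseteq G$ and hence $A \in \mathcal{A}$; moreover $A'$ is obtained from $A$ by replacing the index $i$ with the smaller index $j$, so $A' \leq A$, and the left-compressedness of $\mathcal{A}$ gives $A' \in \mathcal{A}$. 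In either case $A' \in \mathcal{A}$, so $G'$ is a potential generator.

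Finally, since $G'$ is a potential generator, it contains some minimal potential generator $H \in \mathcal{G}$. As $k$ is the rank of $\mathcal{A}$ we have $|H| \geq k = |G'|$, so $H \subseteq G'$ forces $H = G'$, whence $G' \in \mathcal{G}$; combined with $|G'| = k$ this yields $G' \in \mathcal{G} \cap \binom{[n]}{k}$, as required. I do not expect any single step to be a serious obstacle: the hypothesis $n \geq 2r$ is essentially unused beyond ensuring the objects are well-behaved, and the whole difficulty is the careful bookkeeping of which of $i$ and $j$ belongs to $A'$, $A$, $G$, and $G'$, together with confirming the inequality $A' \leq A$ that powers the left-compression step.
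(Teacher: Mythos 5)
Your proof is correct and follows essentially the same route as the paper's: both arguments show that the shifted-down set is a potential generator by exploiting the left-compressedness of $\mathcal{A}$, and then use the fact that its size equals the rank $k$ to conclude it is a \emph{minimal} potential generator and hence lies in $\mathcal{G}$. The only cosmetic differences are that you work with single $ij$-shifts and check each extension $A'$ directly by a case analysis, whereas the paper argues by contradiction for a general $B \leq A$ and completes $B$ with the largest available elements so that a single application of left-compression covers all extensions at once.
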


\begin{proof}
Suppose for a contradiction that there exist $A \in \mathcal{G} \cap \binom{[n]}{k}$ and $B \in \binom{[n]}{k} \setminus \mathcal{G}$ with $B \leq A$. Let $C$ be the set of the $r-k$ largest elements of $[n] \setminus A$, and let $D$ be the set of the $r - k$ largest elements of $[n] \setminus B$. Then $S := A \cup C$ and $T := B \cup D$ are both elements of $\binom{[n]}{r}$, and the fact that $B \leq A$ implies that $T \leq S$. Since $A \subseteq S$ and $A \in \mathcal{G}$ we have $S \in \mathcal{A}$, and since $\mathcal{A}$ is left-compressed it follows that $T \in \mathcal{A}$. However, the fact that $\mathcal{A}$ is left-compressed then implies that $B \cup F \in \mathcal{A}$ for every set $F \in \binom{[n] \setminus B}{r-k}$, and so $B$ is a potential generator of $\mathcal{G}$. This gives a contradiction, since $B \notin \mathcal{G}$ and $\mathcal{A}$ has no generators of size less than $k=|B|$. 
\end{proof}

We now justify our assertion made in the introduction that the family $\AHM_t$ is indeed an MLCIF.

\begin{proposition} \label{AHMtmax}
For $n \geq 2r$ and $3 \leq t \leq r+1$, the family $\AHM_t$ is an MLCIF.
\end{proposition}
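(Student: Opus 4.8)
The plan is to invoke the footnote's observation that an MLCIF is precisely a maximal intersecting family which happens to be left-compressed. Since $\AHM_t$ has already been noted to be a left-compressed intersecting family, it remains only to verify maximality with respect to the intersecting property: for every $B \in \binom{[n]}{r} \setminus \AHM_t$ I must exhibit a member $A \in \AHM_t$ with $A \cap B = \emptyset$, so that $\AHM_t \cup \{B\}$ fails to be intersecting and no proper superfamily of $\AHM_t$ is intersecting.

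First I would record exactly which sets lie outside $\AHM_t$. Unwinding the definition, $B \in \AHM_t$ if and only if either $1 \in B$ and $B \cap [2,t] \neq \emptyset$, or $[2,t] \subseteq B$. Negating this, the condition $B \notin \AHM_t$ splits cleanly into two cases: case (i), where $1 \in B$ but $B \cap [2,t] = \emptyset$ (equivalently $B \cap [1,t] = \{1\}$); and case (ii), where $1 \notin B$ and $[2,t] \not\subseteq B$.

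In case (i), any $A$ disjoint from $B$ must avoid $1$, so I would look for $A$ of the second type, namely with $[2,t] \subseteq A$; since $B \cap [2,t] = \emptyset$, the block $[2,t]$ is automatically disjoint from $B$, and it suffices to complete $A$ using $r-(t-1)$ elements drawn from $[t+1,n] \setminus B$. In case (ii), I would fix some $j \in [2,t] \setminus B$ and build $A$ of the first type with $\{1,j\} \subseteq A$; here $1 \notin B$ and $j \notin B$, so $\{1,j\}$ is disjoint from $B$, and it remains to complete $A$ with $r-2$ further elements avoiding $B$. In each case the resulting $A$ is a genuine member of $\AHM_t$ that is disjoint from $B$, as required.

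The one point needing care, and the only real obstacle, is confirming that enough room remains to complete $A$. In case (i) the available ground set $[t+1,n] \setminus B$ has $n-t-(r-1)$ elements while $r-(t-1)$ are needed, and the inequality $n-t-(r-1) \geq r-(t-1)$ reduces exactly to $n \geq 2r$; the hypothesis $t \leq r+1$ ensures $r-(t-1) \geq 0$, with the extremal value $t=r+1$ giving simply $A = [2,r+1]$ (the case $A'' = \emptyset$). In case (ii) the complement $[n] \setminus (\{1,j\} \cup B)$ has $n-r-2$ elements against the $r-2$ required, which again reduces to $n \geq 2r$. Thus both constructions succeed under the stated hypotheses, and every set outside $\AHM_t$ has a disjoint partner inside it, establishing maximality; everything else is a direct unwinding of the definitions.
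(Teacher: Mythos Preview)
Your argument is correct. The counting in both cases is right: in case~(i) you need $r-(t-1)$ further elements from $[t+1,n]\setminus B$, which has size $n-t-(r-1)$, and $n\geq 2r$ suffices; in case~(ii) you need $r-2$ further elements from a set of size $n-r-2$, and again $n\geq 2r$ suffices. The resulting $A$ lies in $\AHM_t$ in each case, so $\AHM_t$ is a maximal intersecting family, and by the footnote's observation together with the already-noted fact that $\AHM_t$ is left-compressed, it is an MLCIF.

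Your route differs from the paper's. The paper argues by contradiction: assuming $\AHM_t$ is properly contained in some MLCIF $\AHM_t^*$, it uses the \emph{left-compressedness of $\AHM_t^*$} to reduce an arbitrary $A\in\AHM_t^*\setminus\AHM_t$ to one of two specific minimal non-members, namely $\{1,t+1,\dots,t+r-1\}$ or $\{2,\dots,t-1,t+1,\dots,r+2\}$, and then writes down an explicit disjoint partner in $\AHM_t$ for each; it also appeals separately to the Hilton--Milner theorem when $t=r+1$. Your proof instead handles every $B\notin\AHM_t$ directly and uniformly, without passing through a hypothetical larger MLCIF and without the special case $t=r+1$. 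The paper's reduction is slick in that it avoids any counting, while your version is self-contained and makes no use of the Hilton--Milner theorem.
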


\begin{proof}
For $t \geq 3$, the fact that $\AHM_t$ is left-compressed follows immediately from the definition, and Theorem~\ref{intersecting} implies that $\AHM_t$ is intersecting. 
It remains to show that $\AHM_t$ is maximal with these properties. For $t=r+1$ this follows from the Hilton-Milner Theorem~\cite{HM} which states that $\mathcal{HM} = \AHM_{r+1}$ is the largest intersecting family which is not a subfamily of a star. So suppose for a contradiction that $t \leq r$ and $\AHM_t$ is not an MLCIF. Then $\AHM_t$ is a proper subset of an MLCIF $\AHM_t^{*}$, so we may choose a set $A \in \AHM_t^{*} \setminus \AHM_t$. It follows from the definition of $\AHM_t$ that if $1 \in A$ then $\{1,t+1, t+2,\dots,t+r-1\} \leq A$, and if $1 \notin A$ then $\{2,3,\dots,t-1,t+1,t+2,\dots,r+2\} \leq A$.
Since $\AHM_t^{*}$ is left-compressed this implies that either $\{1,t+1, t+2,\dots,t+r-1\} \in \AHM_t^{*}$ or $\{2,3,\dots,t-1,t+1,t+2,\dots,r+2\} \in \AHM_t^{*}$.
Observe that $\{2,3,\dots,t,t+r,t+r+1,\dots,2r\} \in \AHM_t$ and $\{1,t,r+3,r+4,\dots,2r\} \in \AHM_t$ but $\{1,t+1, t+2,\dots,t+r-1\} \cap \{2,3,\dots,t,t+r,t+r+1,\dots,2r\} = \emptyset$, and $\{2,3,\dots,t-1,t+1,t+2,\dots,r+2\} \cap \{1,t,r+3,r+4,\dots,2r\} = \emptyset$. So in either case the family $\AHM_t^{*}$ is not intersecting, a contradiction.
\end{proof}

Observe that if the set $\{2, 3\}$ is a generator of an MLCIF $\mathcal{A}$, then $\AHM_3 \subseteq \mathcal{A}$, and it then follows from the maximality of $\AHM_3$ that $\mathcal{A} = \AHM_3$. This establishes the following corollary.

\begin{corollary} \label{23max}
For $n \geq 2r$, if $\{2, 3\}$ is a generator of an MLCIF $\mathcal{A}$, then $\mathcal{A}=\AHM_3$.
\end{corollary}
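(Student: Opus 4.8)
The plan is to prove the containment $\AHM_3 \subseteq \mathcal{A}$ first and then upgrade it to equality using maximality. Recall that $\AHM_3 = \langle \{\{1,2\},\{1,3\},\{2,3\}\} \rangle$, so to establish the containment it suffices to show that each of $\{1,2\}$, $\{1,3\}$ and $\{2,3\}$ is a potential generator of $\mathcal{A}$; that is, every $r$-set containing one of these three pairs lies in $\mathcal{A}$. The pair $\{2,3\}$ is immediate, since by hypothesis it is a generator and hence a potential generator of $\mathcal{A}$.

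For $\{1,2\}$ and $\{1,3\}$ I would argue directly from left-compressedness. Because $\{2,3\}$ is a generator, the specific set $A^* := \{2,3\} \cup \{n-r+3, \dots, n\}$ lies in $\mathcal{A}$ (this is a legitimate $r$-set, as $n \geq 2r$ ensures $n-r+3 > 3$). Now let $A$ be any $r$-set with $\{1,2\} \subseteq A$ or $\{1,3\} \subseteq A$, and write its elements in increasing order as $a_1 < \dots < a_r$. Then $a_1 = 1 \leq 2$ and $a_2 \leq 3$, while for each $i \geq 3$ the crude bound $a_i \leq n-r+i$ (which holds for the $i$-th smallest element of any $r$-subset of $[n]$) matches exactly the $i$-th coordinate of $A^*$. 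Hence $A \leq A^*$, so $A \in \mathcal{A}$ since $\mathcal{A}$ is left-compressed. This shows $\{1,2\}$ and $\{1,3\}$ are potential generators, completing the proof that $\AHM_3 \subseteq \mathcal{A}$. (Alternatively, one could note that the rank of $\mathcal{A}$ is necessarily two — it cannot be one, since the unique rank-one MLCIF is $\mathcal{S}$, whose only generator is $\{1\}$ — and then apply Lemma~\ref{compressed}: as $\{1,2\} \leq \{2,3\}$ and $\{1,3\} \leq \{2,3\}$, left-compressedness of the family $\mathcal{G} \cap \binom{[n]}{2}$ forces $\{1,2\}$ and $\{1,3\}$ to be generators as well.)

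Finally, Proposition~\ref{AHMtmax} states that $\AHM_3$ is an MLCIF, hence in particular a maximal intersecting family. Since $\mathcal{A}$ is itself intersecting and contains $\AHM_3$, maximality forces $\mathcal{A} = \AHM_3$, as required. I do not expect a genuine obstacle here; the corollary is essentially a packaging of the maximality of $\AHM_3$, and the only step needing a little care is the verification $A \leq A^*$, which reduces to the elementary coordinate-wise comparison above.
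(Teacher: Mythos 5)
Your proof is correct and follows essentially the same route as the paper, which simply observes that $\{2,3\}$ being a generator forces $\AHM_3 \subseteq \mathcal{A}$ (via left-compressedness, exactly the coordinate-wise comparison $A \leq A^*$ you spell out) and then invokes the maximality of $\AHM_3$ from Proposition~\ref{AHMtmax} to conclude equality. You have merely filled in the details the paper leaves to the reader.
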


Using this, we can establish a more detailed understanding of MLCIFs of rank 2. For this we define $\mathcal{I}_j$ for $j \geq 2$ to be the set of all MLCIFs $\mathcal{A} \subseteq \binom{[n]}{r}$ of rank two whose generators of size two are precisely the sets $\{1,2\} ,\dots, \{1,j\}$. Observe that $\AHM_m \in \mathcal{I}_m$ for every $m \geq 4$, but that $\AHM_3 \notin \mathcal{I}_3$. 

\begin{proposition} \label{size2gen}
Let $n \geq 2r$ and suppose that $\mathcal{A} \subseteq\binom{[n]}{r}$ is an MLCIF of rank $2$. Then either $\mathcal{A}=\AHM_3$ or $\mathcal{A} \in \bigcup_{j=2}^{r+1} \mathcal{I}_j$.
\end{proposition}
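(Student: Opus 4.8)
The plan is to analyse the family $\mathcal{G}_2 := \mathcal{G} \cap \binom{[n]}{2}$ of size-two generators, where $\mathcal{G}$ is the canonical generating family of $\mathcal{A}$. Since $\mathcal{A}$ has rank two, $\mathcal{G}_2$ is non-empty, and by Lemma~\ref{compressed} it is left-compressed. The first step is to dispatch the case $\{2,3\} \in \mathcal{G}_2$ immediately via Corollary~\ref{23max}, which gives $\mathcal{A} = \AHM_3$. So I would assume henceforth that $\{2,3\} \notin \mathcal{G}_2$ and aim to show $\mathcal{A} \in \mathcal{I}_j$ for a suitable $j$.

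Next I would pin down the structure of $\mathcal{G}_2$ under this assumption. The key order-theoretic observation is that $\{2,3\} \leq \{a,b\}$ whenever $2 \leq a < b$; hence if any member of $\mathcal{G}_2$ avoided the element $1$, left-compressedness would force $\{2,3\} \in \mathcal{G}_2$, a contradiction. Therefore every member of $\mathcal{G}_2$ contains $1$. Left-compressedness then implies that whenever $\{1,b\} \in \mathcal{G}_2$ we also have $\{1,b'\} \in \mathcal{G}_2$ for all $2 \leq b' \leq b$, so $\mathcal{G}_2 = \{\{1,2\}, \dots, \{1,j\}\}$, where $j := \max\{b : \{1,b\} \in \mathcal{G}_2\} \geq 2$.

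It remains to establish the bound $j \leq r+1$, which I expect to be the main obstacle, since it is the only point requiring the intersecting property rather than purely structural reasoning about down-sets of $2$-sets. Because $\mathcal{A}$ has rank two it is not the star, so there is a set $A_0 \in \mathcal{A}$ with $1 \notin A_0$. For each $i \in [2,j]$ the pair $\{1,i\}$ is a generator, so every $r$-set containing it lies in $\mathcal{A}$; if $i \notin A_0$ then, using $n \geq 2r$, there are at least $n - r - 2 \geq r - 2$ elements outside $A_0 \cup \{1,i\}$, allowing us to extend $\{1,i\}$ to an $r$-set disjoint from $A_0$ and thereby contradict the intersecting property. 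Hence $\{2, \dots, j\} \subseteq A_0$, and since $|A_0| = r$ this yields $j - 1 \leq r$, i.e.\ $j \leq r+1$. Combining the two cases, either $\mathcal{A} = \AHM_3$ or $\mathcal{A} \in \mathcal{I}_j$ for some $2 \leq j \leq r+1$, as required.
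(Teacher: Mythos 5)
Your proposal is correct and follows essentially the same route as the paper: dispatch $\{2,3\} \in \mathcal{G}_2$ via Corollary~\ref{23max}, use Lemma~\ref{compressed} to deduce $\mathcal{G}_2 = \{\{1,2\},\dots,\{1,j\}\}$, and then use the intersecting property to bound $j \leq r+1$. Your final step merely runs the paper's contrapositive argument in the direct direction (fixing $A_0 \not\ni 1$ and showing $[2,j] \subseteq A_0$, rather than assuming $j > r+1$ and deducing $\mathcal{A} \subseteq \mathcal{S}$), which is the same underlying idea.
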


\begin{proof} 
Let $\mathcal{F}$ be the set of all generators of $\mathcal{A}$ of size two. If $\{2, 3\} \in \mathcal{F}$ then $\mathcal{A} = \AHM_3$ by Corollary~\ref{23max}, so we may assume $\{2, 3\} \notin \mathcal{F}$. Since $\mathcal{F}$ is left-compressed by Lemma~\ref{compressed} it follows that $\mathcal{F} = \{\{1, i\}: 2 \leq i \leq j\}$ for some integer $j \geq 2$, that is, that $\mathcal{A} \in \mathcal{I}_j$. If $j > r+1$ then the fact that $\mathcal{A}$ is intersecting implies that $1 \in A$ for every $A \in \mathcal{A}$, so $\mathcal{A}$ is a subfamily of the star~$\mathcal{S}$, contradicting the fact that $\mathcal{A}$ is an MLCIF of rank $2$. So we must have $j \leq r+1$ as required. 
\end{proof}

\section{Proof of Theorem~\ref{mainresult}}

Proposition~\ref{nocontenders} tells us that every MLCIF which is 
optimal for any non-empty $X \subseteq [2,n]$ must have rank one or two. 
Before proceeding to the proof of Theorem~\ref{mainresult} we now 
further narrow down these possibilities to just two MLCIFs for each such 
set $X \neq \{2\}$. These possibilities are given in 
Corollary~\ref{ahmopt}, which follows directly from our next proposition 
stating that almost all members of $\bigcup_{j = 2}^{r+1} \mathcal{I}_j$ 
cannot be optimal. Similar statements can be made for the case $X = 
\{2\}$, but due to the fact that $\AHM_2$ is not an MLCIF it is 
convenient instead to defer this case to the proof of 
Theorem~\ref{mainresult}.

\begin{proposition} \label{ahmbest}
Fix $r \geq 3$, let $n$ be sufficiently large, let $X \subseteq [2, n]$ be non-empty and write $m := \max X$. 
\begin{enumerate}[(i)]
\item If $X \not\subseteq [2, r+1]$ then $\hit_X(\mathcal{S}) > \hit_X(\mathcal{A})$ for every $\mathcal{A} \in \bigcup_{j=2}^{r+1} \mathcal{I}_j$. That is, $\mathcal{S}$ hits $X$ more than any family in $\bigcup_{j=2}^{r+1} \mathcal{I}_j$.
\item If $X \subseteq [2, r+1]$ and $X \neq \{2\}$, then $\hit_X(\AHM_m) > \hit_X(\mathcal{A})$ for every $\mathcal{A} \in \bigcup_{j=2}^{r+1} \mathcal{I}_j \setminus \{\AHM_m\}$. That is, $\AHM_m$ hits $X$ more than any other family in $\bigcup_{j=2}^{r+1} \mathcal{I}_j$.
\end{enumerate}
\end{proposition}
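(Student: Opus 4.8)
The plan is to separate, for each $\mathcal{A} \in \bigcup_{j=2}^{r+1} \mathcal{I}_j$, the dominant contribution of its size-two generators from a lower-order remainder. If $\mathcal{A} \in \mathcal{I}_j$ then its size-two generators are exactly $\{1,2\}, \dots, \{1,j\}$, which generate the subfamily $\mathcal{B}_j := \{A \in \binom{[n]}{r} : 1 \in A \text{ and } A \cap [2,j] \neq \emptyset\} \subseteq \mathcal{A}$. Every other member of $\mathcal{A}$ lies above a generator of size at least three, so by Lemma~\ref{canon} there are at most $2^{2r}\binom{n-3}{r-3} = O(n^{r-3})$ of them; hence $\hit_X(\mathcal{B}_j) \le \hit_X(\mathcal{A}) \le \hit_X(\mathcal{B}_j) + O(n^{r-3})$. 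A direct count gives $\hit_X(\mathcal{B}_j) = \hit_X(\mathcal{S})$ whenever $X \subseteq [2,j]$, while if $X \not\subseteq [2,j]$ then, choosing $x \in X \setminus [2,j]$ and counting the $r$-sets containing $\{1,x\}$ but disjoint from $[2,j]$, we get $\hit_X(\mathcal{S}) - \hit_X(\mathcal{B}_j) \ge \binom{n-j-1}{r-2} = \Theta(n^{r-2})$. Finally, since $\AHM_m$ is the disjoint union of $\mathcal{B}_m$ with the $r$-sets above $[2,m]$ avoiding $1$, and since $X \subseteq [2,m]$ forces every such set to hit $X$, one computes $\hit_X(\AHM_m) = \hit_X(\mathcal{S}) + \binom{n-m}{r-m+1}$.

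Part (i) is then immediate: as $X \not\subseteq [2,r+1] \supseteq [2,j]$ for every $j \le r+1$, the second case above applies, giving $\hit_X(\mathcal{S}) - \hit_X(\mathcal{A}) \ge \Theta(n^{r-2}) - O(n^{r-3}) > 0$ once $n$ is large.

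For part (ii) the easy case is $m = 3$: here the correction $\binom{n-3}{r-2} = \Theta(n^{r-2})$ already dominates, so $\AHM_3$ beats every competitor (each satisfying $\hit_X(\mathcal{A}) \le \hit_X(\mathcal{S}) + O(n^{r-3})$) at top order, and no tie analysis is needed. We may thus assume $m \ge 4$. The difficulty is that for $j \ge m$ we have $X \subseteq [2,m] \subseteq [2,j]$, so the dominant terms agree ($\hit_X(\mathcal{B}_j) = \hit_X(\mathcal{S})$) and the deciding correction $\binom{n-m}{r-m+1}$ has order $n^{r-m+1} \le n^{r-3}$, i.e.\ the same order as the remainder from the larger generators; a crude bound therefore cannot separate the families. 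The tool I would prove to resolve this is a structural lemma: for any $\mathcal{A} \in \mathcal{I}_j$, every $A \in \mathcal{A}$ with $1 \notin A$ satisfies $[2,j] \subseteq A$. Indeed, were some $i \in [2,j]$ absent from such an $A$, then (as $n \ge 2r$) we could pick an $(r-2)$-set $T \subseteq [n] \setminus (A \cup \{1,i\})$ and obtain $\{1,i\} \cup T \in \mathcal{A}$, a member disjoint from $A$, contradicting that $\mathcal{A}$ is intersecting.

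Granting the structural lemma, fix $m \ge 4$ and distinguish three ranges of $j$. For $j < m$ we have $X \not\subseteq [2,j]$, so $\hit_X(\mathcal{A}) \le \hit_X(\mathcal{B}_j) + O(n^{r-3}) \le \hit_X(\mathcal{S}) - \Theta(n^{r-2})$, which is beaten by $\hit_X(\AHM_m) \ge \hit_X(\mathcal{S})$. For $j \ge m$, the lemma shows every $1$-free member of $\mathcal{A}$ contains $[2,j] \supseteq X$ and so hits $X$, while no member with $1 \in A$ and $A \cap [2,j] = \emptyset$ can hit $X$; together with $\hit_X(\mathcal{B}_j) = \hit_X(\mathcal{S})$ this yields the clean identity $\hit_X(\mathcal{A}) = \hit_X(\mathcal{S}) + |\{A \in \mathcal{A} : 1 \notin A\}|$, and hence $\hit_X(\AHM_m) - \hit_X(\mathcal{A}) = \binom{n-m}{r-m+1} - |\{A \in \mathcal{A} : 1 \notin A\}|$. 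Since the $1$-free members all contain $[2,j]$, their number is at most $\binom{n-j}{r-j+1}$. For $j > m$ this is a polynomial of strictly smaller degree than $\binom{n-m}{r-m+1}$, so the difference is positive for large $n$. For $j = m$ the number is at most $\binom{n-m}{r-m+1}$, with equality only if $\mathcal{A}$ contains every $r$-set above $[2,m]$ avoiding $1$; but then $\mathcal{A} \supseteq \AHM_m$, so the maximality in Proposition~\ref{AHMtmax} forces $\mathcal{A} = \AHM_m$, and the inequality is strict for $\mathcal{A} \neq \AHM_m$. The main obstacle is exactly this $j \ge m$, $m \ge 4$ tie, which the structural lemma defuses by reducing the comparison to counting $1$-free sets above the nested intervals $[2,j]$.
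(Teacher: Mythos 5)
Your proof is correct, and its overall architecture matches the paper's: isolate the dominant contribution of the size-two generators, treat everything generated only by sets of size at least three as an $O(n^{r-3})$ error via Lemma~\ref{canon}, and split part (ii) according to whether $j<m$ or $j\ge m$. Part (i), the case $j<m$, and the case $m=3$ are essentially the paper's arguments in different clothing. Where you genuinely diverge is the critical comparison for $\mathcal{A}\in\mathcal{I}_j$ with $j\ge m\ge 4$. The paper works at the level of generators: it uses Theorem~\ref{intersecting} to show every generator $G$ satisfies $1\in G$ or $[2,m]\subseteq G$, notes $[2,m]\notin\mathcal{G}$ since $\mathcal{A}\neq\AHM_m$, and so bounds the relevant part of $\hit_X(\mathcal{A})$ by $2^{2r}\binom{n}{r-m}$, which loses to $\binom{n-m}{r-m+1}$ only for $n$ large. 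You instead work at the level of members, proving directly (your structural lemma, whose one-line proof is correct) that every $1$-free member of $\mathcal{A}\in\mathcal{I}_j$ contains $[2,j]$; this gives the exact identity $\hit_X(\mathcal{A})=\hit_X(\mathcal{S})+|\{A\in\mathcal{A}:1\notin A\}|$ and reduces everything to counting $1$-free members, with the $j=m$ tie settled exactly by the maximality of $\AHM_m$ (Proposition~\ref{AHMtmax}) rather than asymptotically. Your version buys a cleaner, non-asymptotic strict inequality in the hardest subcase (only the $j>m$ degree comparison and the error-term absorption still need $n$ large), at the modest cost of the extra structural lemma.
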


\begin{proof}
For (i), fix an MLCIF $\mathcal{A} \in \bigcup_{j=2}^{r+1} \mathcal{I}_j$ and let $\mathcal{G}$ be the canonical generating family of $\mathcal{A}$. By Lemma~\ref{canon} 
we have~$|\mathcal{G}| \leq 2^{2r}$. Define
$$\mathcal{S}':=\{S \in \mathcal{S}:\{1,m\} \subseteq S \mbox{~and~} [2,r+1] \cap S =\emptyset\},$$ 
and
$$\mathcal{A}^{\star}:=\left\{A \in \mathcal{A}:\{1,j\}\subseteq A \mbox{ for some $j \in [2, r+1]$}\right\}.$$
Then (for sufficiently large $n$) we have $|\mathcal{S}'| = \binom{n-r-2}{r-2} > 2^{2r}\binom{n}{r-3}  \geq |\mathcal{A} \setminus \mathcal{A}^{\star}|$; the final inequality holds since every set in $\mathcal{A} \setminus \mathcal{A}^{\star}$ is generated by one of the at most $2^{2r}$ generators of size at least 3, each of which generates at most $\binom{n}{r-3}$ sets. Observe also that $\mathcal{A}^{\star} \subseteq \mathcal{S}$, that $\mathcal{S}' \cap \mathcal{A}^{\star} = \emptyset$, and that every element of $\mathcal{S}'$ is an element of $\mathcal{S}$ which hits $X$. It follows that
\begin{align*}
\hit_X(\mathcal{S}) &\geq \hit_X(\mathcal{A}^{\star})+ |\mathcal{S}'| 
> \hit_X(\mathcal{A}^{\star})+ |\mathcal{A} \setminus \mathcal{A}^{\star}| \geq \hit_X(\mathcal{A}),
\end{align*}
as required.

For (ii) we introduce the following notation: for any MLCIF $\mathcal{A}$, write 
$$\mathcal{A}^\circ := \left\{A \in \mathcal{A} : \{1, j\} \subseteq A \mbox{ for some $j \in [2, m]$}\right\} \mbox{ and } \mathcal{A}^+ = \mathcal{A} \setminus \mathcal{A}^\circ.$$
Assume $X \subseteq [2, r+1]$, and observe that since $m = \max X$, for each $x \in X$ the set $\{1, x\}$ is a generator of $\AHM_m$. It follows that $\hit_X(\AHM_m) \geq |X|\binom{n-r-1}{r-2}$. Consider any MLCIF $\mathcal{A} \in \bigcup_{j=2}^{r+1} \mathcal{I}_j$ with $\mathcal{A} \neq \AHM_m$, and let $\mathcal{G}$ be the canonical generating family of~$\mathcal{A}$, so $|\mathcal{G}| \leq 2^{2r}$ by Lemma~\ref{canon}. Suppose first that $\mathcal{A} \in \bigcup_{j=2}^{m-1} \mathcal{I}_j$. Then $\mathcal{G}$ contains at most $|X|-1$ generators of the form $\{1, x\}$ with $x \in X$, each of which generates at most $\binom{n}{r-2}$ members of $\mathcal{A}$, whilst each of the at most $2^{2r}$ remaining generators $G \in \mathcal{G}$ generates at most $r\binom{n}{r-3}$ members of $\mathcal{A}$ which hit $X$. (In fact, generators $G$ satisfying $|G|=2$ but not hitting $X$ generate at most $r\binom{n}{r-3}$ members of $\mathcal{A}$ which hit $X$, namely those sets which contain both $G$ and one of the at most $r$ elements of $X$.
All other generators have size at least $3$, and thus generate at most $\binom{n}{r-3}$ members of~$\mathcal{A}$.) 
So we have $\hit_X(\mathcal{A}) \leq (|X|-1)\binom{n}{r-2} + 2^{2r}r\binom{n}{r-3}$, and so (for $n$ sufficiently large) $\hit_X(\AHM_m) > \hit_X(\mathcal{A})$, as required. 

We may therefore assume that $\mathcal{A} \in \bigcup_{j=m}^{r+1} \mathcal{I}_j$, and in particular that $\{1, j\}$ is a generator of $\mathcal{A}$ for every $j \in [2, m]$. Observe that we then have $\mathcal{A}^\circ = \AHM_m^\circ$, so $\hit_X(\mathcal{A}^\circ) = \hit_X(\AHM_m^\circ)$. We now compare $\hit_X(\mathcal{A}^+)$ and $\hit_X(\AHM_m^+)$; observe for this that $\AHM_m^+$ contains precisely those sets $S \in \binom{[n]}{r}$ with $1 \notin S$ and $[2, m] \subseteq S$, so we have $\hit_X(\AHM_m^+) = \binom{n-m}{r-m+1}$. On the other hand, since $\mathcal{A}$ is intersecting, Theorem~\ref{intersecting} tells us that $\mathcal{G}$ is intersecting also; since $\mathcal{G}$ includes $\{1, j\}$ for every $j \in [2, m]$, it follows that every generator $G\in \mathcal{G}$ must satisfy either $1 \in G$ or $[2, m] \subseteq G$. 
However, every set $A \in \mathcal{A}$ with $1 \in A$ which hits $X$ is an element of $\mathcal{A}^\circ$, so the sets generated by generators $G$ with $1 \in G$ do not contribute to $\hit_X(\mathcal{A}^+)$. Also, since $\AHM_m$ is an MLCIF whose generators are $[2, m]$ and $\{1, j\}$ for $j \in [2, m]$, and $\mathcal{A} \neq \AHM_m$, we must have $[2, m] \notin \mathcal{G}$. 
So every generator $G \in \mathcal{G}$ with $[2, m] \subseteq G$ has size at least $m$, and so the number of sets generated by generators of this form is at most $2^{2r}\binom{n}{r-m}$. We conclude that (for sufficiently large~$n$) 
we have $\hit_X(\mathcal{A}^+) \leq 2^{2r}\binom{n}{r-m} < \hit_X(\AHM_m^+)$, and so
$$\hit_X(\mathcal{A}) = \hit_X(\mathcal{A}^\circ) + \hit_X(\mathcal{A}^+) < \hit_X(\AHM_m^\circ) + \hit_X(\AHM_m^+) = \hit_X(\AHM_m),$$
as required.
\end{proof}

Recall that if $X \subseteq [2, r+1]$ is non-empty then the star $\mathcal{S}$ is not optimal for $X$ since $\hit_{X}(\AHM_{r+1}) = \hit_X(\mathcal{S}) +1$. This fact, together with Propositions~\ref{nocontenders},~\ref{size2gen} and~\ref{ahmbest}, immediately implies the following important corollary, narrowing down the list of potential optimal families for a set $X$ to just two possibilities.

\begin{corollary} \label{ahmopt}
For every $r \geq 3$ the following statements hold for sufficiently large~$n$.
\begin{enumerate}
\item For every non-empty $X \subseteq [2,r+1]$ with $X \neq \{2\}$, if $\mathcal{A}$ is an MLCIF which is optimal for $X$ then $\mathcal{A} \in \{\AHM_3, \AHM_m\}$, where $m = \max X$.
\item For every $X \not\subseteq [2, r+1]$, if $\mathcal{A}$ is an MLCIF which is optimal for $X$ then $\mathcal{A} \in \{\mathcal{S}, \AHM_3\}$. 
\end{enumerate}
\end{corollary}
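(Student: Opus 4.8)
The plan is to treat Corollary~\ref{ahmopt} as a straightforward assembly of the structural and comparison results already established, requiring no new counting of its own. First I would recall the list of candidate families produced by combining Proposition~\ref{nocontenders} with Proposition~\ref{size2gen}: any MLCIF that is optimal for a non-empty $X \subseteq [2,n]$ has rank one or two, the unique rank-one MLCIF is $\mathcal{S}$, and every rank-two MLCIF is either $\AHM_3$ or a member of $\bigcup_{j=2}^{r+1}\mathcal{I}_j$. Hence for any such $X$ the only possible optimal MLCIFs are $\mathcal{S}$, $\AHM_3$, and the families in $\bigcup_{j=2}^{r+1}\mathcal{I}_j$; the task in each part is then simply to eliminate all but two of these using Proposition~\ref{ahmbest}.

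For part (2), where $X \not\subseteq [2,r+1]$, I would invoke Proposition~\ref{ahmbest}(i) directly: it gives $\hit_X(\mathcal{S}) > \hit_X(\mathcal{A})$ for every $\mathcal{A} \in \bigcup_{j=2}^{r+1}\mathcal{I}_j$, so no member of the union can be optimal. This leaves only $\mathcal{S}$ and $\AHM_3$ as possible optimal families, which is exactly the claimed conclusion.

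For part (1), where $X \subseteq [2,r+1]$ is non-empty and $X \neq \{2\}$, I would first rule out $\mathcal{S}$ using the Hilton--Milner observation recalled before the corollary, namely $\hit_X(\AHM_{r+1}) = \hit_X(\mathcal{S})+1$, so that $\mathcal{S}$ is strictly beaten by $\AHM_{r+1}$ and cannot be optimal. Then, writing $m := \max X$, Proposition~\ref{ahmbest}(ii) shows that $\AHM_m$ strictly beats every other family in $\bigcup_{j=2}^{r+1}\mathcal{I}_j$, so that $\AHM_m$ is the only member of the union that can be optimal. Together with the surviving candidate $\AHM_3$, this yields $\mathcal{A} \in \{\AHM_3, \AHM_m\}$.

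Since both parts follow almost mechanically from the preceding propositions, I do not expect a genuine obstacle here; the substantive work, namely the hitting estimates, has already been carried out in Proposition~\ref{ahmbest}. The only points needing care are bookkeeping ones: one must remember that $\AHM_3 \notin \bigcup_{j=2}^{r+1}\mathcal{I}_j$, so that $\AHM_3$ persists as a candidate alongside whichever member of the union is best; and one should confirm the degenerate case $m=3$ of part (1) (that is, $X \in \{\{3\},\{2,3\}\}$), where $\AHM_m = \AHM_3$ and Proposition~\ref{ahmbest}(ii) asserts that $\AHM_3$ beats the entire union, so that the conclusion $\{\AHM_3,\AHM_m\}$ collapses consistently to $\{\AHM_3\}$.
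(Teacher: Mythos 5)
Your proposal is correct and follows exactly the route the paper intends: the paper gives no formal proof, stating only that the corollary follows immediately from Propositions~\ref{nocontenders}, \ref{size2gen} and~\ref{ahmbest} together with the observation that $\hit_X(\AHM_{r+1}) = \hit_X(\mathcal{S})+1$ rules out $\mathcal{S}$ when $X \subseteq [2,r+1]$, and your assembly of these ingredients (including the bookkeeping that $\AHM_3 \notin \bigcup_j \mathcal{I}_j$ and the degenerate case $m=3$) is exactly the intended argument.
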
 

Finally, to prove Theorem~\ref{mainresult} we simply need to compare, for each set $X$, the hitting of these two potential optimal families with $X$. We do this on a case-by-case basis.

\begin{proof}[Proof of Theorem~\ref{mainresult}]
Throughout this proof we will use our assumption that $n$ is sufficiently large relative to $r$ without further comment. We begin with case~\eqref{1}, where $X = \{2\}$. By Proposition~\ref{nocontenders} and Proposition~\ref{size2gen} the only possible optimal MLCIFs for $X$ are $\mathcal{S}$, $\mathcal{\AHM}_3$ and the members of $\mathcal{I}_j$ for $2 \leq j \leq r+1$. Observe that $\hit_X(\AHM_3) = \binom{n-2}{r-2} + \binom{n-3}{r-2}$, whilst $\hit_X(\mathcal{S}) = \binom{n-2}{r-2}$. Furthermore, for each $\mathcal{A} \in \bigcup_{j=2}^{r+1} \mathcal{I}_j$ we have $\hit_X(\mathcal{A}) \leq \binom{n-2}{r-2} + 2^{2r}\binom{n}{r-3}$, since $\mathcal{A}$ has at most $2^{2r}$ generators by Lemma~\ref{canon}. It follows that $\AHM_3$ is the unique optimal MLCIF for $X$. 

We next turn to case~\eqref{2}, where $|X| = 2$ and $X \cap \{2, 3\} \neq \emptyset$. If $X=\{2,3\}$ then Corollary~\ref{ahmopt} implies that $\AHM_3$ is the unique optimal MLCIF for $X$. Assume therefore that either $X = \{2,m\}$ or $X = \{3,m\}$ for some $m \geq 4$. In each case we have
\begin{align*}
\hit_X(\mathcal{S}) &= \binom{n-2}{r-2}+\binom{n-3}{r-2},\\
\hit_X(\AHM_3) &= \binom{n-2}{r-2}+\binom{n-3}{r-2}+\binom{n-4}{r-3}, \mbox{ and, if $m \in [4,r+1]$, then}\\
\hit_X(\AHM_m) &= \binom{n-2}{r-2}+\binom{n-3}{r-2}+\binom{n-m}{r-m+1}.
\end{align*}
Note that $\binom{n-4}{r-3} \geq \binom{n-m}{r-m+1}$ for all $m \geq 4$ with equality if and only if $m=4$. By Corollary~\ref{ahmopt} it follows that $\AHM_3$ is the unique optimal MLCIF for $X$ for $m > 4$, whilst $\AHM_3$ and $\AHM_4$ are the only two optimal MLCIFs for $X$ if $m = 4$.

Now we consider case~\eqref{3}, where $X=\{2,3,m\}$ for some $m \geq 4$. We then have 
\begin{align*}
\hit_X(\mathcal{S}) &= \binom{n-2}{r-2} + \binom{n-3}{r-2} + \binom{n-4}{r-2},\\
\hit_X(\AHM_3) &=\binom{n-2}{r-2}+\binom{n-3}{r-2}+\binom{n-3}{r-2}, \mbox{ and, if $m \in [4,r+1]$, then}\\
\hit_X(\AHM_m) &= \binom{n-2}{r-2}+\binom{n-3}{r-2}+\binom{n-4}{r-2}+\binom{n-m}{r-m+1}.
\end{align*}
Observe that $\binom{n-3}{r-2} \geq \binom{n-4}{r-2}+\binom{n-m}{r-m+1}$ for all $m \geq 4$ with equality holding if and only if $m=4$. By Corollary~\ref{ahmopt} it follows that $\AHM_3$ is the unique optimal MLCIF for $X$ for $m > 4$ whilst $\AHM_3$ and $\AHM_4$ are the only two optimal MLCIFs for $X$ if $m = 4$.

Lastly, in case~\eqref{4} we have that $X \subseteq [2, r+1]$ and that $X$ does not meet the conditions of cases~\eqref{1}--\eqref{3}. Define $m := \max X$. Then by Corollary~\ref{ahmopt} the only two possibilities for optimal MLCIFs for $X$ are $\AHM_3$ and $\AHM_m$. Observe that $\AHM_m$ has $|X|$ generators of size $2$ which intersect $X$, so $\hit_X(\AHM_m) \geq |X| \binom{n-r-1}{r-2}$. 
If $\{2,3\} \subseteq X$, then $|X| \geq 4$  (otherwise we have case~\eqref{2} or~\eqref{3}), so we have
\[\hit_X(\AHM_3) \leq 3\binom{n-2}{r-2} < |X| \binom{n-r-1}{r-2} \leq \hit_X(\AHM_m).\] 
Similarly, if $|\{2,3\} \cap X|=1$ then $X = \{3\}$ or $|X| \geq 3$ (otherwise we have case~\eqref{1} or~\eqref{2}). If $X = \{3\}$ then $\AHM_3 = \AHM_m$, whilst if $|X| \geq 3$ then 
we have
\[\hit_X(\AHM_3) \leq 2\binom{n-2}{r-2} + |X| \binom{n}{r-3} < |X| \binom{n-r-1}{r-2} \leq \hit_X(\AHM_m).\] 
When $X \cap \{2,3\} = \emptyset$, the family $\AHM_3$ has no rank $2$ generators hitting $X$, whilst $\AHM_m$ has $|X|$ such generators, so certainly $\hit_X(\AHM_3) < \hit_X(\AHM_m)$. In each case it follows that $\AHM_m$ is the unique optimal MLCIF for $X$.
 
Finally, it remains to prove that the star is the unique optimal MLCIF for every set $X \subseteq [2, n]$ which is not covered by cases~\eqref{1}--\eqref{4}. Any such $X$ has $X \not \subseteq [2, r+1]$, so by Corollary~\ref{ahmopt} it suffices for this to show that $\hit_X(\AHM_3) < \hit_X(\mathcal{S})$. Moreover, any such $X$ satisfies either
\begin{enumerate}[(i)]
\item $|X|=1$,
\item $|X|=2$ and $X \cap \{2, 3\} = \emptyset$,
\item $|X|=3$ and $|X \cap \{2, 3\}| \leq 1$, or
\item $|X| \geq 4$.
\end{enumerate}
Observe that in cases~(i), (ii) and~(iii) we have $\hit_X(\mathcal{S}) \geq |X| \binom{n-4}{r-2}$. However, in cases~(i) and~(ii) we also have $\hit_X(\AHM_3) \leq 6\binom{n}{r-3}$, and in case~(iii) we have $\hit_X(\AHM_3) \leq 2\binom{n}{r-2} + 2\binom{n}{r-3}$. Similarly in case (iv) we have $\hit_X(\mathcal{S}) \geq 4 \binom{n-5}{r-2}$ and $\hit_X(\AHM_3) \leq 3\binom{n}{r-2}$. So in all cases we have $\hit_X(\AHM_3) < \hit_X(\mathcal{S})$, as required.
\end{proof}

We finish this section by returning to the question of which left-compressed intersecting families (LCIFs) have maximum hitting with a fixed non-empty set $X \subseteq [n]$. For this we extend the definition of optimality to LCIFs in the natural way, saying that an LCIF $\mathcal{A} \subseteq \binom{[n]}{r}$ is optimal for $X$ if $\hit_X(\mathcal{A}) \geq \hit_X(\mathcal{F})$ for every LCIF $\mathcal{F} \subseteq \binom{[n]}{r}$. As for MLCIFs, if $1 \in X$ then $\mathcal{S}$ is the unique optimal LCIF, so again we consider only $X \subseteq [2, n]$. Since every LCIF is a subfamily of an MLCIF, the optimal LCIFs for $X$ are precisely the left-compressed subfamilies of optimal MLCIFs which can be formed by removing sets which do not hit~$X$. From this observation we obtain the following corollary (which should be read in conjunction with Theorem~\ref{mainresult}). 

\begin{corollary} \label{best}
Let $r \geq 3$ and $n$ be sufficiently large. Suppose that $X \subseteq [2,n]$ is non-empty and let $m:= \max X$. 
\begin{enumerate}[(i)]
\item If $\mathcal{S}$ is not an optimal MLCIF for $X$ then the optimal LCIFs for $X$ are precisely the optimal MLCIFs for $X$.
\item If $\mathcal{S}$ is an optimal MLCIF for $X$ then the optimal LCIFs for $X$ are precisely the LCIFs $\mathcal{A}$ with $\AHM_m \subseteq \mathcal{A} \subseteq \mathcal{S}$. 
\end{enumerate}
\end{corollary}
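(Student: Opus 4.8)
The plan is to exploit the observation recorded just before the statement. Since every LCIF extends to an MLCIF containing it, the maximum of $\hit_X$ over all LCIFs equals its maximum $M$ over all MLCIFs, and an LCIF is optimal exactly when its hitting equals $M$. The structural reduction I would set up is this: if $\mathcal{A}$ is an optimal LCIF, pick an MLCIF $\mathcal{A}' \supseteq \mathcal{A}$; then $\hit_X(\mathcal{A}') \geq \hit_X(\mathcal{A}) = M$ forces $\mathcal{A}'$ to be an optimal MLCIF, and since the hitting counts agree while $\mathcal{A} \subseteq \mathcal{A}'$, every set of $\mathcal{A}' \setminus \mathcal{A}$ must fail to hit $X$. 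Hence $\mathcal{A}$ contains every member of $\mathcal{A}'$ that hits $X$, and being left-compressed it contains the whole down-closure of these hitting members; write $\mathcal{A}_{\min}(\mathcal{A}')$ for this down-closure, which is itself an LCIF contained in $\mathcal{A}'$. Conversely, any LCIF $\mathcal{A}$ with $\mathcal{A}_{\min}(\mathcal{A}') \subseteq \mathcal{A} \subseteq \mathcal{A}'$ contains all hitting members of $\mathcal{A}'$, so $\hit_X(\mathcal{A}) = \hit_X(\mathcal{A}') = M$ and $\mathcal{A}$ is optimal. Thus the optimal LCIFs are precisely the left-compressed families trapped between $\mathcal{A}_{\min}(\mathcal{A}')$ and $\mathcal{A}'$ as $\mathcal{A}'$ ranges over the optimal MLCIFs, and the entire task reduces to computing $\mathcal{A}_{\min}(\mathcal{A}')$ for each optimal MLCIF supplied by Theorem~\ref{mainresult}.

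For part~(ii), Theorem~\ref{mainresult} gives that $\mathcal{S}$ is the unique optimal MLCIF and that this forces $X \not\subseteq [2, r+1]$, so $m = \max X \geq r+2$ and hence $\AHM_m = \{A \in \mathcal{S} : A \cap [2, m] \neq \emptyset\}$ is a subfamily of $\mathcal{S}$ (the generator $[2,m]$ is now too large to contribute). I would show $\mathcal{A}_{\min}(\mathcal{S}) = \AHM_m$. Since $X \subseteq [2, m]$, any member of $\mathcal{S}$ hitting $X$ already lies in $\AHM_m$, so $\mathcal{A}_{\min}(\mathcal{S}) \subseteq \AHM_m$. Conversely, for $B \in \AHM_m$ not hitting $X$ we have $m \notin B$ (as $m \in X$) and some $j \in B \cap [2, m]$ with $j \notin X$; replacing $j$ by $m$ gives a member $A \in \mathcal{S}$ with $A \cap X \neq \emptyset$ and $B \leq A$, so $B$ lies in the down-closure of the hitting members and $\AHM_m \subseteq \mathcal{A}_{\min}(\mathcal{S})$. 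Combining with the first paragraph yields exactly the interval $\AHM_m \subseteq \mathcal{A} \subseteq \mathcal{S}$.

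For part~(i), Theorem~\ref{mainresult} tells us that every optimal MLCIF is some $\AHM_t$ with $t \in \{3, 4, m\}$, and it suffices to prove $\mathcal{A}_{\min}(\AHM_t) = \AHM_t$ for each such $\AHM_t$; then the only optimal LCIF contained in a given optimal MLCIF is the MLCIF itself, so the optimal LCIFs coincide with the optimal MLCIFs. Concretely I would verify that every non-hitting member $B \in \AHM_t$ lies below a hitting member. In each of the subcases~(a)--(d) one first records that $X \cap [2, t] \neq \emptyset$, so any $B$ with $[2, t] \subseteq B$ automatically hits $X$; the remaining members satisfy $1 \in B$ and $B \cap [2, t] \neq \emptyset$, and for these an increasing swap replacing a suitable element of $B$ by an element of $X \cap [2, t]$ produces a hitting set $A \in \AHM_t$ with $B \leq A$. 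The only delicate configuration is $t = 3$ with $X \cap \{2, 3\} = \{2\}$: here $2 \notin B$ forces $3 \in B$, and replacing $1$ by $2$ lands in a set containing $\{2, 3\}$, which is a valid member of $\AHM_3$ dominating $B$.

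The bulk of the routine work, and the only place a genuine case split seems unavoidable, is this last verification in part~(i): for each pair $(t, X)$ arising in cases~(a)--(d) one must confirm both that the swapped set remains in $\AHM_t$ and that it dominates $B$ in the order $\leq$. I expect this to be the main obstacle only in the bookkeeping sense. Once the inequality $X \cap [2, t] \neq \emptyset$ is in hand and the correct element to raise has been identified (typically the smallest element of $B \cap [2, t]$, or the element $1$ in the exceptional configuration above), each subcase is immediate, and no idea beyond the reduction of the first paragraph is required.
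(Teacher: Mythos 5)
Your proposal is correct and takes essentially the same approach as the paper: both rest on the observation that an optimal LCIF is exactly a left-compressed family squeezed between an optimal MLCIF and the down-closure (under $\leq$) of its members that hit $X$, and then compute that down-closure for each optimal MLCIF supplied by Theorem~\ref{mainresult}. The only cosmetic difference is that the paper performs this computation by exhibiting the one or two $\leq$-maximal elements of $\AHM_t$ (resp.\ the single maximal element of $\AHM_m$ when $m>r+1$) and checking that these hit $X$, whereas you verify the same fact by an element swap applied to each non-hitting set, correctly isolating the one delicate configuration $t=3$, $X\cap\{2,3\}=\{2\}$.
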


\begin{proof}
Suppose first that $\mathcal{S}$ is not an optimal MLCIF for $X$. Then by Theorem~\ref{mainresult} every optimal MLCIF for $X$ has the form $\AHM_t$ for some $t \in [3, r+1]$, and moreover we have $t \in X$ in every case except when $X = \{2\}$ or $X = \{2,x\}$ with $x \in [4, n]$, in which case $t=3$. (When $X = \{2,4\}$ both $\AHM_3$ and $\AHM_4$ are optimal for $X$; for the former we have $t=3$ and for the latter we have $t \in X$.) Observe that every set $A \in \AHM_t$ has either $A \leq B := \{1, t, n-r+3, \dots, n\}$ or $A \leq C := \{2, 3, \dots, t, n-r+t, \dots, n\}$, and furthermore that for $t=3$ every set $A \in \AHM_3$ has $A \leq C$. Since $C$ hits $X$ in all cases, and $B$ hits $X$ if $t \in X$, it follows that every LCIF $\mathcal{A}$ which is a proper subfamily of $\AHM_t$ has $\hit_X(\mathcal{A}) < \hit_X(\mathcal{\AHM}_t)$, and so is not optimal, proving~(i).

Now suppose that $\mathcal{S}$ is an optimal MLCIF for $X$. Then $\mathcal{S}$ is the unique optimal MLCIF for $X$ by Theorem~\ref{mainresult}, so every optimal LCIF $\mathcal{A}$ for $X$ has $\mathcal{A} \subseteq \mathcal{S}$. Furthermore we have $m > r+1$, so $\AHM_m$ consists precisely of those sets $A \in \binom{[n]}{r}$ with $A \leq D$, where~$D$ is the set formed by adding the $r-2$ largest elements of $[n] \setminus \{m\}$ to $\{1, m\}$. Since $D$ hits $X$ it follows that every optimal LCIF $\mathcal{A}$ has $\AHM_m \subseteq \mathcal{A}$, and~(ii) follows since $\hit_X(\mathcal{\AHM}_m) = \hit_X(\mathcal{S})$.\footnote{Since here we have $m > r+1$, the family $\AHM_m$ is not an MLCIF, so this conclusion does not contradict our assertion that $\mathcal{S}$ is the unique optimal MLCIF for $X$.}
\end{proof}

\section{Further Directions}

It would be interesting to know how large $n$ must be to satisfy Theorem~\ref{mainresult} (Barber previously asked the analogous question following his proof of Theorem~\ref{barber}). Following our proofs directly gives a bound on $n$ which is exponential in $r$, but we suspect that more careful arguments would yield a polynomial bound.

Recall that, for sufficiently large $n$, Theorem~\ref{barber} identified all $X \subseteq [n]$ for which an MLCIF of rank $1$ (that is, $\mathcal{S}$) is optimal, and Theorem~\ref{mainresult} shows that in all other cases every optimal MLCIF for $X$ has rank $2$. In the spirit of the Hilton-Milner theorem, it would also be interesting to consider the optimal MLCIF among all families other than the star $\mathcal{S}$, giving the following question.

\begin{question} \label{notstar}
For each $n \geq 2r$ and $X \subseteq [n]$, which MLCIFs $\mathcal{T} \neq \mathcal{S}$ satisfy $\hit_X(\mathcal{T}) \geq \hit_X(\mathcal{A})$ for every MLCIF $\mathcal{A} \neq \mathcal{S}$?
\end{question}

To answer Question~\ref{notstar} we must certainly consider MLCIFs of rank greater than $2$. Indeed, by Proposition~\ref{size2gen} every MLCIF of rank $2$ has no size $2$ generators hitting any element $x \in X$ such that $x > r+1$. So, for example, when $r=3$ and $X=\{5\}$, no generator of size $2$ in a canonical generating family can hit $X$. Observe that the family $\mathcal{A}_{3,4,5}:= \langle\{\{a,b,c\}: 1 \leq a < b  < c \leq 5\}\rangle$ has $6$ generators of size $3$ hitting $X$. Every other MLCIF (excluding the star) has at most $5$ generators of size $3$ hitting $X$, and thus for sufficiently large $n$ the family $\mathcal{A}_{3,4,5}$ is unique in achieving maximum hitting with $X$ among all MLCIFs excluding the star. The problem appears to become significantly harder for larger values of $r$, for which it seems difficult just to enumerate all the MLCIFs which exist. In fact it seems to be non-trivial to resolve even the apparently-simpler question of identifying, for every $X \subseteq [n]$, the MLCIFs $\mathcal{A}$ which maximise $\hit_X(\mathcal{A})$ among all MLCIFs of rank two.

\section*{Acknowledgements}

We thank Ben Barber for helpful discussions, and two anonymous reviewers for their helpful suggestions for improving the presentation of this manuscript.

\end{document}